\documentclass[11pt,reqno]{amsart}
\usepackage[T1]{fontenc}
\usepackage[utf8]{inputenc}
\usepackage{amsmath,amssymb,amsthm}
\usepackage[margin=1.15in]{geometry}
\usepackage[protrusion=true,expansion=false]{microtype}
\usepackage[colorlinks=true,linkcolor=blue,citecolor=blue,urlcolor=blue]{hyperref}

\theoremstyle{plain}
\newtheorem{theorem}{Theorem}[section]
\newtheorem{proposition}[theorem]{Proposition}
\newtheorem{lemma}[theorem]{Lemma}
\newtheorem{corollary}[theorem]{Corollary}
\newtheorem{conjecture}[theorem]{Conjecture}
\theoremstyle{definition}
\newtheorem{definition}[theorem]{Definition}
\newtheorem{example}[theorem]{Example}
\newtheorem{remark}[theorem]{Remark}

\DeclareMathOperator{\PF}{PF}
\DeclareMathOperator{\Ap}{Ap}
\DeclareMathOperator{\Min}{Min}
\DeclareMathOperator{\Max}{Max}
\newcommand{\N}{\mathbb{N}}
\newcommand{\Z}{\mathbb{Z}}
\newcommand{\mult}{\mathrm{m}}
\newcommand{\FF}{\mathrm{F}}
\newcommand{\cc}{\mathrm{c}}
\newcommand{\gnus}{\mathrm{g}}
\newcommand{\nn}{\mathrm{n}}
\newcommand{\ee}{\mathrm{e}}
\newcommand{\typ}{\mathrm{t}}
\newcommand{\W}{\mathrm{W}}
\newcommand{\LL}{\mathrm{L}}
\newcommand{\Lam}{\Lambda}
\newcommand{\Xii}{\Xi}
\newcommand{\lam}{\lambda}

\begin{document}

\title[The type of a numerical semigroup and Wilf's conjecture]
{An upper bound for the type of a numerical semigroup,\\
and a reduction of Wilf's conjecture}

\author{Mohammad F. Marashdeh}
\address{Department of Mathematics, Mutah University, Karak, Jordan}
\email{marashdeh@mutah.edu.jo}

\subjclass[2020]{Primary 20M14; Secondary 11D07, 05A20, 11B13}
\keywords{Numerical semigroup, Wilf's conjecture, pseudo-Frobenius number, type,
embedding dimension, Ap\'ery set, Ap\'ery poset, additive basis}

\begin{abstract}
Let $S$ be a numerical semigroup with multiplicity $\mult$, conductor $\cc$, embedding
dimension $\ee$, type $\typ$ and genus $\gnus$, and let $\nn=\cc-\gnus$. Wilf's
conjecture asserts that $\ee\,\nn\ge\cc$; the inequality $\gnus\le\typ\,\nn$ of
Fr\"oberg, Gottlieb and H\"aggkvist settles it when $\typ\le\ee-1$. The Ap\'ery set of
$S$ with respect to any $s\in S\setminus\{0\}$ carries a partial order whose maximal
elements are the pseudo-Frobenius numbers translated by $s$; for $s=\mult$ its minimal
elements are the minimal generators other than $\mult$. Comparing the two extremal
statistics bounds the type by $\typ\le\ee-1+\Xii(S)\le\ee-1+\Theta(S)$, where
$\Theta(S)$ measures the redundancy of the covering of the gaps of $S$ by the
pseudo-Frobenius numbers and $\Xii(S)$ refines it. With an exact decomposition of the
Wilf number this yields the genus bound $\gnus\le\ee-1+\typ(\nn-1)$, strictly stronger
than $\gnus\le\typ\,\nn$ precisely when $\typ\ge\ee$, and reduces Wilf's conjecture to
an inequality free of $\cc$ and $\nn$. We determine the equality case of
$\gnus\le\typ\,\nn$, recovering a classification of Singhal; answer a question of
Moscariello and Sammartano whenever $\ee\ge\typ+1$; and correct Kaplan's
classification of the equality case for $\cc\le2\mult$, from which an infinite family
is missing.
\end{abstract}

\maketitle

\section{Introduction}

A \emph{numerical semigroup} is a submonoid $S\subseteq\N=\{0,1,2,\dots\}$ whose
complement $\N\setminus S$ is finite. The elements of $\N\setminus S$ are the
\emph{gaps} of $S$, their number $\gnus=\gnus(S)$ is the \emph{genus}, the largest gap
$\FF=\FF(S)$ is the \emph{Frobenius number}, and $\cc=\cc(S)=\FF+1$ is the
\emph{conductor}. We write
\[
\LL=\LL(S)=S\cap[0,\cc),\qquad \nn=\nn(S)=|\LL(S)|,
\]
so that $\nn+\gnus=\cc$. Every numerical semigroup has a unique minimal system of
generators, whose cardinality $\ee=\ee(S)$ is the \emph{embedding dimension}, and we
denote by $\mult=\mult(S)$ the \emph{multiplicity}, that is, the least positive
element of $S$. Standard references are the monographs of Rosales and
Garc\'ia-S\'anchez \cite{RGS} and of Ram\'irez Alfons\'in \cite{RA}.

In 1978 Wilf \cite{Wilf} posed a question that remains one of the central open
problems in the theory.

\begin{conjecture}[Wilf]\label{conj:wilf}
For every numerical semigroup $S\neq\N$,
\[
\ee(S)\,\nn(S)\;\ge\;\cc(S).
\]
\end{conjecture}

We record the discrepancy as the \emph{Wilf number}
\[
\W(S)\;=\;\ee(S)\,\nn(S)-\cc(S),
\]
so that Conjecture \ref{conj:wilf} reads $\W(S)\ge0$.

The conjecture has been established in the following cases: $\ee\le3$, due to Fr\"oberg,
Gottlieb and H\"aggkvist \cite{FGH}
and to Dobbs and Matthews \cite{DM}; $\cc\le 2\mult$, due to Kaplan \cite{Kaplan};
$2\ee\ge\mult$, due to Sammartano \cite{Sammartano}; $\cc\le3\mult$ and $\nn\le10$,
due to Eliahou \cite{EliahouJEMS}; $3\ee\ge\mult$, due to Eliahou \cite{EliahouEJC};
$\nn\le12$, due to Eliahou and Mar\'in-Arag\'on \cite{EliahouMarin}; semigroups with a
large second generator, due to Spirito \cite{Spirito}; semigroups with $\mult\mid\cc$
and $4\ee\ge\mult$, due to Eliahou \cite{EliahouDivsets}; almost symmetric
semigroups, due to Barucci \cite{Barucci} and, by a different route, to D'Anna and
Moscariello \cite{DM23}; multiplicity $\mult\le19$, due to Bruns,
Garc\'ia-S\'anchez, O'Neill and Wilburne \cite{BGOW} and Kliem and Stump \cite{KS}; and
genus at most $100$, verified by Delgado, Eliahou and Fromentin \cite{DEF}.

Numerical semigroups with small or negative Wilf number, and the near-misses among
them, are studied by Eliahou and Fromentin \cite{EliahouFromentin} and by Delgado
\cite{DelgadoMathZ}. Combined with Zhai's asymptotic count \cite{Zhai}, Eliahou's
theorem on $\cc\le3\mult$ shows that the conjecture holds for asymptotically all
numerical semigroups ordered by genus. We refer to Delgado's survey
\cite{DelgadoSurvey} for a systematic account.

\subsection{The type and the FGH range}

An integer $f$ is a \emph{pseudo-Frobenius number} of $S$ if $f\notin S$ and
$f+s\in S$ for every $s\in S\setminus\{0\}$. The set of these is denoted $\PF(S)$
and its cardinality is the \emph{type} $\typ=\typ(S)$ of $S$. The classical approach to
Conjecture \ref{conj:wilf} rests on the inequality of Fr\"oberg, Gottlieb
and H\"aggkvist \cite[Theorem 20]{FGH},
\begin{equation}\label{eq:FGH}
\cc(S)\;\le\;\bigl(\typ(S)+1\bigr)\,\nn(S),
\end{equation}
which yields Conjecture \ref{conj:wilf} whenever
\begin{equation}\label{eq:FGHrange}
\typ(S)\;\le\;\ee(S)-1 .
\end{equation}
We refer to \eqref{eq:FGHrange}, equivalently to $\ee\ge\typ+1$, as the
\emph{FGH range}. It contains all symmetric semigroups and all semigroups of maximal embedding
dimension; see \cite[Chs.~2--3]{RGS}.

Outside the FGH range, the type has been of limited use. As Delgado observes of
\eqref{eq:FGH}, ``since no upper bound exists for type in higher embedding dimensions,
Proposition 3.1 [that is, \eqref{eq:FGH}] cannot yield further general results''
\cite[\S3.1]{DelgadoSurvey}.
Large classes containing many semigroups with $\typ\ge\ee$ have been settled by other
means, almost symmetric semigroups among them. To the best of our knowledge, however,
no theorem in the literature takes $\typ\ge\ee$ as its hypothesis, and none of the
known cases of Conjecture \ref{conj:wilf} is proved by an argument that runs through
the type in that range. We show that the classical argument behind
\eqref{eq:FGH} yields more than the inequality itself, and that the additional
structure identifies the obstruction.

\subsection{Results}

The proof of \eqref{eq:FGH} discards two nonnegative quantities. For an integer
$x\in[1,\FF]$ let $\nu_S(x)=|\{f\in\PF(S):f-x\in S\}|$ be the number of
pseudo-Frobenius numbers dominating $x$, and set
\[
\lam_S(y)=\bigl|S\cap[0,y]\bigr|,\qquad
\sigma(S)=\sum_{f\in\PF(S)}\bigl|S\cap(f,\FF]\bigr|,\qquad
\Theta(S)=\sum_{x\in\N\setminus S}\bigl(\nu_S(x)-1\bigr).
\]
Here $\sigma(S)$, the dominance defect, measures the failure of the pseudo-Frobenius
numbers to dominate $\LL(S)$, while $\Theta(S)$, the redundancy defect, measures the
redundancy of the covering of the gap set on which the proof of \eqref{eq:FGH} rests.
The resulting decomposition of the Wilf number (Proposition \ref{prop:defect}) is
exact:
\begin{equation}\label{eq:intro-defect}
\W(S)\;=\;\bigl(\ee-\typ-1\bigr)\nn\;+\;\sigma(S)\;+\;\Theta(S),
\qquad \sigma(S),\ \Theta(S)\ \ge\ 0 ,
\end{equation}
and \eqref{eq:FGH} is what remains once both defects are dropped. Dropping $\Theta$
alone sharpens it to $\cc\le(\typ+1)\nn-\sigma(S)$, with equality if and only if
$\Theta(S)=0$ (Corollary \ref{cor:FGH}).

Our principal result is an upper bound for the type. The invariants occurring in
\eqref{eq:FGH} and in Conjecture \ref{conj:wilf} are the two extremal statistics of a
single finite poset, the Ap\'ery poset
$Q(S)=(\Ap(S,\mult)\setminus\{0\},\preceq_S)$: its minimal elements are the primitive
elements (minimal generators) of $S$ other than $\mult$, and its maximal elements are
$\PF(S)+\mult$ (Lemma \ref{lem:poset}). The base point need not be the multiplicity:
Lemma \ref{lem:poset} is proved for $\Ap(S,s)$ with
$s\in S\setminus\{0\}$ arbitrary, giving one inequality for each element of $S$
(Proposition \ref{prop:general}); the multiplicity is the unique base point $s$ for
which every minimal element $p$ satisfies $p>s$, so that $p-s$ is a gap. Bounding the
number of maximal elements by the number of minimal ones then gives the following, in
which
$\Xii(S)=\sum_{w\in\Ap(S,\mult)\setminus\{0\}}\lfloor w/\mult\rfloor(\nu_S(w-\mult)-1)$.

\begin{theorem}[cf. Theorem \ref{thm:type}]\label{thm:intro-type}
For every numerical semigroup $S\ne\N$,
\[
\typ(S)\;\le\;\sum_{p\in P\setminus\{\mult\}}\nu_S(p-\mult)
\;\le\;\ee(S)-1+\Xii(S)\;\le\;\ee(S)-1+\Theta(S),
\]
where $P$ is the set of primitive elements of $S$.
\end{theorem}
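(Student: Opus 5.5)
The plan is to prove the three inequalities one at a time, in the order they appear in the statement, using Lemma \ref{lem:poset} (for the base point $s=\mult$) together with two elementary facts about gaps. Throughout, $Q(S)=(\Ap(S,\mult)\setminus\{0\},\preceq_S)$, where $a\preceq_S b$ means $b-a\in S$.

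\emph{First inequality.} Take $f\in\PF(S)$. By Lemma \ref{lem:poset}, $f+\mult$ is a maximal element of the finite poset $Q(S)$. Below it lies some minimal element, which by the same lemma is a primitive element $p\ne\mult$ with $(f+\mult)-p\in S$, that is, $f-(p-\mult)\in S$. Since $p\in\Ap(S,\mult)$, the integer $p-\mult$ is not in $S$. Since $p$ is primitive and $p\ne\mult$, we have $p>\mult$. Hence $p-\mult\in[1,\FF]$ is a gap, and $f$ dominates it. Choose one such $p$ for each $f$; this gives a map $\PF(S)\to P\setminus\{\mult\}$. The fibre over $p$ consists of pseudo-Frobenius numbers dominating $p-\mult$, so it has size at most $\nu_S(p-\mult)$. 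Summing over the fibres gives $\typ(S)\le\sum_{p\in P\setminus\{\mult\}}\nu_S(p-\mult)$.

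\emph{Second inequality.} I first record that $\nu_S(x)\ge1$ for every gap $x$. The argument: if $x\notin\PF(S)$, then some $x+s$ with $s\in S\setminus\{0\}$ is again a gap, and we pass to it. This process strictly increases $x$ and stays below $\FF$, so it terminates at some $f\in\PF(S)$ with $f-x\in S$. Consequently every term $\nu_S(w-\mult)-1$ with $w\in\Ap(S,\mult)\setminus\{0\}$ is nonnegative, because $w-\mult$ is a gap whenever $w>\mult$. (If $w<\mult$, then $\lfloor w/\mult\rfloor=0$ and the term contributes nothing.) Now write the sum as
\[
\sum_{p\in P\setminus\{\mult\}}\nu_S(p-\mult)=(\ee-1)+\sum_{p\in P\setminus\{\mult\}}\bigl(\nu_S(p-\mult)-1\bigr).
\]
Each such $p$ satisfies $p>\mult$, so $\lfloor p/\mult\rfloor\ge1$, and multiplying each nonnegative summand by this factor can only increase it. Adding the remaining nonnegative terms indexed by non-primitive $w$ gives exactly $\Xii(S)$.

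\emph{Third inequality.} Fix $w\in\Ap(S,\mult)\setminus\{0\}$ and set $k=\lfloor w/\mult\rfloor$. Since $w\not\equiv0\pmod\mult$, the integers $w-j\mult$ for $1\le j\le k$ are positive, and they are gaps because $w\in\Ap(S,\mult)$. These gaps are exactly the gaps in the residue class of $w$ modulo $\mult$. Hence, as $w$ ranges over $\Ap(S,\mult)\setminus\{0\}$, the families $\{w-j\mult\}$ partition $\N\setminus S$. Monotonicity holds along each family: if $f-(w-\mult)\in S$, then $f-(w-j\mult)=f-(w-\mult)+(j-1)\mult\in S$, so $\nu_S(w-j\mult)\ge\nu_S(w-\mult)$. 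Therefore
\[
k\bigl(\nu_S(w-\mult)-1\bigr)\le\sum_{j=1}^{k}\bigl(\nu_S(w-j\mult)-1\bigr),
\]
and summing over $w$ gives $\Xii(S)\le\Theta(S)$.

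The only step I expect to need care is the first. There I must make sure that the minimal elements of $Q(S)$ lying below $f+\mult$ are genuinely primitive elements different from $\mult$, and that the choice of one such $p$ per $f$ is what converts the poset comparison into a count. Lemma \ref{lem:poset} supplies exactly this, so the remaining work is bookkeeping.
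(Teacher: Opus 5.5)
Your proof is correct and follows the paper's argument essentially step for step: the fibre count over the minimal elements of $Q(S)$ is Proposition \ref{prop:general} at $s=\mult$, and your inline proofs that $\nu_S\ge1$ on gaps and that $\nu_S$ is monotone along $w-\mult,w-2\mult,\dots$ are Lemma \ref{lem:standard}(ii) and Lemma \ref{lem:numono}. The case $w<\mult$ that you set aside is vacuous, since every element of $\Ap(S,\mult)\setminus\{0\}$ exceeds $\mult$.
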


The weighted term $\Xii(S)$ rests on one further observation: $\nu_S$ is
nondecreasing
along the descending chain $x,x-\mult,x-2\mult,\dots$ of gaps (Lemma
\ref{lem:numono}), so the redundancy counted by $\Theta(S)$ accumulates at the foot of
each residue class. The refinement is strict for all but $24{,}094$ of the
$23{,}663{,}125$ numerical semigroups of genus at most $31$. Computation supports the
same bound with $\sigma(S)$ in place of $\Xii(S)$ (Conjecture \ref{conj:sigma}).

Thus a numerical semigroup lies outside the FGH range only if
$\Theta(S)\ge\typ+1-\ee$. Rewritten by means of \eqref{eq:intro-defect}, Theorem
\ref{thm:intro-type} becomes an unconditional bound for the genus (Corollary
\ref{cor:master}),
\[
\gnus(S)+\sigma(S)\;\le\;\ee(S)-1+\typ(S)\bigl(\nn(S)-1\bigr),
\]
whose weaker form $\gnus\le\ee-1+\typ(\nn-1)$, obtained by discarding $\sigma$, is
strictly stronger than \eqref{eq:FGH} if and only if $\typ\ge\ee$, that is, exactly
outside the range in which \eqref{eq:FGH} settles Conjecture \ref{conj:wilf}.

We do not prove Wilf's conjecture, and neither bound above settles a case of it that
\eqref{eq:FGH} does not: Theorem \ref{thm:intro-type} bounds $\Theta(S)$ from below by
$\typ+1-\ee$, whereas by \eqref{eq:intro-defect} the conjecture requires a bound
larger by a factor of $\nn$.

The same identity does, however, reduce Conjecture \ref{conj:wilf} to a statement in
which neither $\cc$ nor $\nn$ appears. Write $\PF(S)=\{f_1<\dots<f_{\typ}\}$ and
$d=\typ+1-\ee$. Only $d$ of the $\typ$ nonnegative summands of $\sigma(S)$ are needed
to absorb the factor $\nn$, and one obtains unconditionally (Proposition
\ref{prop:reduction}) that $\W(S)\ge\Theta(S)-\sum_{i=1}^{d}\lam_S(f_i)$ whenever
$\typ\ge\ee$. Wilf's conjecture therefore follows from the inequality
$\Theta(S)\ge\sum_{i=1}^{d}\lam_S(f_i)$ on the range $\typ\ge\ee$ (Conjecture
\ref{conj:R} and Proposition \ref{prop:Rwilf}); we verify that inequality over all
$171{,}202{,}689$ numerical semigroups of genus at most $35$.

The equality case $\W(S)=0$ was posed by Wilf \cite{Wilf}.
Two families are classical: every $2$-generated semigroup is symmetric and therefore
satisfies $\W(S)=0$; and for $\mult\ge2$, $k\ge1$ the semigroup
\begin{equation}\label{eq:intro-family}
T_{\mult,k}\;=\;\{0,\mult,2\mult,\dots,(k-1)\mult\}\cup[k\mult,\infty)
\;=\;\langle \mult,\ k\mult+1,\ k\mult+2,\ \dots,\ k\mult+\mult-1\rangle
\end{equation}
has $\cc=k\mult$, $\nn=k$, $\ee=\mult$ and $\W(T_{\mult,k})=0$. Moscariello and
Sammartano \cite[Question 8]{MS} asked whether these are the only ones; the question
is reproduced as \cite[Problem 2.5]{DelgadoSurvey}. It has been answered
affirmatively for $\cc\le3\mult$
\cite[Remark 6.6]{EliahouJEMS} and, by computation, for $\mult\le15$
\cite[Remark 4.7]{BGOW}. We settle it on the whole FGH range.

\begin{theorem}[cf. Theorem \ref{thm:equality}]\label{thm:intro-equality}
Let $S\neq\N$ be a numerical semigroup with $\ee(S)\ge\typ(S)+1$. Then $\W(S)=0$ if
and only if $\ee(S)=2$ or $S=T_{\mult,k}$ for some $\mult\ge3$, $k\ge1$.
\end{theorem}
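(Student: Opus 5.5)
The natural starting point is the exact decomposition \eqref{eq:intro-defect}:
\[
\W(S)=(\ee-\typ-1)\nn+\sigma(S)+\Theta(S).
\]
In the FGH range $\ee\ge\typ+1$ all three summands are nonnegative. Hence $\W(S)=0$ forces three conditions simultaneously: $(\ee-\typ-1)\nn=0$, $\sigma(S)=0$ and $\Theta(S)=0$. Since $\nn\ge1$ for $S\ne\N$, the first gives $\ee=\typ+1$.

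The condition $\sigma(S)=0$ says that no pseudo-Frobenius number $f$ has an element of $S$ in $(f,\FF]$. I would apply it to the smallest pseudo-Frobenius number. Then $\FF<\mult$ or every pseudo-Frobenius number exceeds every element of $\LL(S)\setminus\{0\}$. I will call this Fact A: every $f\in\PF(S)$ satisfies $f>\max\LL(S)$.

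The condition $\Theta(S)=0$ says every gap is dominated by exactly one pseudo-Frobenius number; call this Fact B. The converse direction is a direct check: $2$-generated semigroups are symmetric with $\ee=2$, $\typ=1$, and for $T_{\mult,k}$ one has $\ee=\mult$, $\typ=\mult-1$, with $\W=0$ computed in \eqref{eq:intro-family}. So the work is the forward direction.

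\emph{Case $\typ=1$.} Then $\ee=2$, and this case is done.

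\emph{Case $\typ\ge2$.} The goal is to show $S=T_{\mult,k}$. Let $\ell=\max\LL(S)$. By Fact A all of $\PF(S)$ lies in $(\ell,\FF]$, and the interval $(\ell,\cc)$ consists entirely of gaps. The plan is to show $\LL(S)=\{0,\mult,\dots,(k-1)\mult\}$, after which $\cc=k\mult$ follows from $\nn=k$ and $\W=0$.

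First, every element of $(\ell,\FF]$ is itself pseudo-Frobenius. Take $x\in(\ell,\FF]$; it is a gap dominated by some $f\in\PF(S)$, so $f-x\in S$. But $f-x<\FF-\ell<\cc$, and $f-x$ lies in $\LL(S)$. Fact A places $f>\ell$, which alone does not force $f=x$, so this bound is too weak. I would instead use the Apéry-poset comparison behind Theorem \ref{thm:intro-type}. Equality $\W=0$ forces equality throughout
\[
\typ\le\sum_{p}\nu_S(p-\mult)\le\ee-1+\Theta=\ee-1,
\]
so each $p-\mult$ for primitive $p\ne\mult$ is dominated by exactly one pseudo-Frobenius number. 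Moreover the map sending a minimal element of $Q(S)$ to the unique maximal element above it is a bijection. Hence the Apéry poset is a disjoint union of $\ee-1$ chains-with-unique-top components, each with one minimal and one maximal element.

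Combining this with Fact A, each Apéry element $w$ other than the tops must lie below $\ell+\mult$. Using $\nu_S$-monotonicity along residue chains (Lemma \ref{lem:numono}), I would then argue that each component is a single chain $p<p+q<\dots$. Fact A then forces all non-top elements of $\LL(S)$ to be multiples of $\mult$.

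The main obstacle is this structural step: converting "unique top above each primitive plus Fact A" into "$\LL(S)\subseteq\mult\N$". I would first try to show that if some $s\in\LL(S)$ had $s\not\equiv0\pmod{\mult}$, then the gap $\FF-s$ is dominated by two distinct pseudo-Frobenius numbers, namely $\FF$ and the pseudo-Frobenius number in the class of $s+\mult$-shifted elements, contradicting Fact B. Otherwise, $s$ would create an element of $S$ above a pseudo-Frobenius number below $\FF$, contradicting Fact A. Once $\LL(S)\subseteq\mult\N$, we get $\LL(S)=\{0,\mult,\dots,(k-1)\mult\}$. Then $\ee\nn=\cc$ with $\cc\le k\mult+\mult$ and $\ee\le\mult$ gives $\ee=\mult$ and $\cc=k\mult$, that is, $S=T_{\mult,k}$; and $\typ\ge2$ gives $\mult\ge3$.
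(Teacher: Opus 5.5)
Your opening is the paper's: \eqref{eq:defect} with $\ee\ge\typ+1$ forces $\ee=\typ+1$ and $\sigma(S)=\Theta(S)=0$. The case $\typ=1$ then gives $\ee=2$, and the converse is Lemma~\ref{lem:family} together with the symmetry of $2$-generated semigroups. The gap is in the case $\typ\ge2$, which is where all the content lies (it is the classification of Theorem~\ref{thm:FGHequality}).

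A first, easily repaired, problem: you abandon the claim $[\ell+1,\FF]\subseteq\PF(S)$, yet you use it later. Your assertion that ``every non-maximal element of $Q(S)$ lies below $\ell+\mult$'' says precisely that no gap above $\ell$ fails to be pseudo-Frobenius. The missing observation is that $\ell+\mult\in S$ and $\ell+\mult>\ell=\max\LL$, so $\ell+\mult\ge\cc$, i.e.\ $\mult\ge\FF+1-\ell$. Then in your own attempt, $f-x\in S$ with $0\le f-x<\FF-\ell<\mult$ forces $f=x$. Together with Fact A this gives $\PF(S)=[\ell+1,\FF]$ and $\typ=\FF-\ell$ (Lemma~\ref{lem:top}).

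The decisive problem is that the inclusion $\LL\subseteq\mult\N$ is never proved. The Ap\'ery-poset facts you derive are correct: each $\nu_S(p-\mult)=1$, there is a bijection $\Min Q(S)\to\Max Q(S)$, and the poset splits into components with one bottom and one top. But nothing you say turns these into the asserted chains. The contradiction you sketch also has no content. A pseudo-Frobenius number $\FF-j\ne\FF$ dominates the gap $\FF-s$ exactly when $s-j\in\LL$, and nothing in the proposal produces such a $j$ for an $s\in\LL$ with $\mult\nmid s$. The phrase ``the pseudo-Frobenius number in the class of $s+\mult$-shifted elements'' does not name an object.

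What is missing is a way to convert $\Theta(S)=0$ into information about the spacing of $\LL$. The paper gets it by reading the partition $G(S)=\bigsqcup_{f\in\PF(S)}(f-\LL)$, with $\PF(S)=[\ell+1,\ell+\typ]$, as a window count: for $z\in[-\typ,\ell-1]$, the window $(z,z+\typ]$ contains no element of $\LL$ if $\ell-z\in\LL$ and exactly one otherwise (Lemma~\ref{lem:window}). This forces consecutive elements of $\LL$ to differ by $\typ$ or $\typ+1$, the difference after $s_i$ being $\typ+1$ if and only if $\ell-s_i\in\LL$. A second window argument gives $\ell-\LL=\LL$, so every difference is $\typ+1$. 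Hence $\mult=\typ+1$, $\LL=\{0,\mult,\dots,(\nn-1)\mult\}$, and $\cc=\ell+\typ+1=\nn\mult$.

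Your final step also needs care. The relation $\ee\nn=\cc$ with $\cc\le k\mult+\mult$ and $\ee\le\mult$ does not force $\ee=\mult$. Once $\LL\subseteq\mult\N$, you must either compute $\ee=\mult$ from the primitive elements or use $\typ=\FF-\ell$.
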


The proof shows, via \eqref{eq:intro-defect}, that $\W(S)=0$ and $\ee\ge\typ+1$ force
$\ee=\typ+1$ and $\sigma(S)=\Theta(S)=0$, and then classifies the semigroups with
$\sigma=\Theta=0$, that is, the equality case of \eqref{eq:FGH}. That classification
is due to Singhal \cite[Theorem 1.6]{Singhal}. The FGH range is a condition on $\ee$
and $\typ$ alone, so Theorem \ref{thm:intro-equality} imposes no bound on the
multiplicity; the theorem covers, for instance, every semigroup of maximal embedding
dimension. We conjecture that the hypothesis $\ee\ge\typ+1$ can be removed altogether
(Conjecture \ref{conj:final}).

Finally, we correct \cite[Proposition 26]{Kaplan}. Kaplan proved Wilf's conjecture for
$\FF<2\mult$, that is, for $\cc\le2\mult$ \cite[Theorem 24]{Kaplan}, and asserted that
in this range equality holds only for
$\langle \mult,\mult+1,\dots,2\mult-1\rangle$ and $\langle3,4\rangle$. An infinite
family is missing.

\begin{theorem}[= Theorem \ref{thm:kaplan}]\label{thm:intro-kaplan}
Let $S\neq\N$ be a numerical semigroup with $\cc(S)\le2\mult(S)$. Then $\W(S)=0$ if
and only if $S=T_{\mult,1}$ or $S=T_{\mult,2}$ for some $\mult\ge2$, or
$S=\langle3,4\rangle$.
\end{theorem}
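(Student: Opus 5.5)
We would prove Theorem \ref{thm:intro-kaplan} by a direct count in the Ap\'ery set $\Ap(S,\mult)$. The decomposition \eqref{eq:intro-defect} is not needed here, because the structure forced by $\cc\le2\mult$ is rigid enough to compute $\ee$, $\nn$ and $\cc$ explicitly. The "if" direction is routine. For $T_{\mult,1}$ and $T_{\mult,2}$ the values $\nn=k$, $\ee=\mult$, $\cc=k\mult$ were recorded after \eqref{eq:intro-family}. For $\langle3,4\rangle$ we have $\ee=2$, $\nn=3$, $\cc=6$.

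For the "only if" direction we first dispose of $\cc=\mult$, which gives $S=T_{\mult,1}$. Assume then $\mult<\cc\le2\mult$. Write $A=S\cap(\mult,\cc)$, $k=|A|$, and let $h=\cc-\mult-1-k$ be the number of gaps in $(\mult,\cc)$. Then $\nn=k+2$ and $\cc=\mult+1+k+h$. Since $[2\mult,\infty)\subseteq S$, for each residue $r\in[1,\mult-1]$ the Ap\'ery element is $r+\mult$ if $r+\mult\in S$, and $r+2\mult$ otherwise.
\begin{itemize}
\item Every element of $S\cap(\mult,2\mult)$ is primitive, since it is less than $2\mult$.
\item An element $r+2\mult$ with $r+\mult\notin S$ is primitive exactly when it is not of the form $a+a'$ with $a,a'\in A$. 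The alternative decomposition $\mult+(r+\mult)$ is unavailable because $r+\mult\notin S$.
\end{itemize}
Let $h'$ be the number of residues of the second kind whose Ap\'ery element is primitive. This yields the exact formula
\[
\ee=\mult-h+h',\qquad
\W(S)=(\mult-h+h')(k+2)-(\mult+1+k+h).
\]

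Next we treat $k=0$. Here $A+A=\emptyset$, so $h'=h$ and $\W=\mult-1-h\ge0$. Equality holds iff $h=\mult-1$, that is $\cc=2\mult$, which gives $S=T_{\mult,2}$.

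For $k\ge1$ we must show $\W>0$ except for $\langle3,4\rangle$. The tools are the constraint $k+h\le\mult-1$ and two bounds on $h'$: the trivial bound $h'\ge0$, and $h'\ge h-\binom{k+1}{2}$, since $A+A$ has at most $\binom{k+1}{2}$ elements.
\begin{itemize}
\item Using $h'\ge0$ and $\mult-h\ge k+1$, we get $\W\ge(\mult-h)(k+1)-2h-1$.
\item We then treat this bound as a function of $h$ on $[0,\mult-1-k]$. It is linear and decreasing, so it suffices to check the endpoint $h=\mult-1-k$. There $\mult-h=k+1$, and $\W\ge(k+1)^2-2(\mult-1-k)-1$.
\end{itemize}
This is positive unless $\mult$ is large compared with $k$. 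In that regime we switch to the second bound on $h'$: when $h$ is large and $k$ is small, $A+A$ is too small to cover the gap residues, so $h'$ is large and $\ee$ grows.

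Combining the two bounds should leave only finitely many small configurations with $\W\le0$. Among these we expect $\mult=3$, $A=\{4\}$, $h=1$ (the gap $5$, with $8=4+4$ so $h'=0$), which is exactly $\langle3,4\rangle$. The remaining small configurations would be checked by hand. We also sanity-check against known cases, for instance that the semigroup $\langle \mult,\mult+1,\dots,2\mult-1\rangle=T_{\mult,1}$ cited from Kaplan fits the formula.

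The main obstacle is this last balancing step. For $k\ge1$ the linear bound $\W\ge(\mult-h)(k+1)-2h-1$ is not positive by itself when $k$ is small and $h$ is close to $\mult-1-k$. One must use that each element of $A+A$ lying in $[2\mult,3\mult)$ removes at most one primitive element while $A$ contributes $k$ to $\nn$. We would first try to split on whether $2\min A\ge\cc+\mult$ (no sums hit the relevant range) versus the opposite case, bounding $h-h'$ by the number of sums in $A+A$ falling in $(2\mult,\cc+\mult)$. Our expectation is that this forces $\mult-h\ge2$ together with $k\ge1$ to give strict inequality, except when $\mult=3$, $k=1$, $h=1$.
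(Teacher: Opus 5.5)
Your reduction is sound and matches the paper's own setup. With $a=k+1$ (the paper's $A$ contains $\mult$) one has $\delta-a=h-h'$. Your bound $h'\ge0$ is the paper's $\delta\le u$, and your bound $h'\ge h-\binom{k+1}{2}$ is its $\delta\le a(a+1)/2$. Your formula $\ee=\mult-h+h'$ and your treatment of $k=0$ are correct. The gap is in the case $k\ge1$.

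First, a slip: from $h'\ge0$ one gets $\W\ge(\mult-h)(k+1)-2h-1-k$, not $(\mult-h)(k+1)-2h-1$. Your version would give $\W\ge1$ for $\langle3,4\rangle$, which has $\W=0$. At $h=\mult-1-k$ the correct bound is $(k+1)(k+2)-2\mult$.

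Second, and decisively, your expectation that the two bounds leave only finitely many configurations to check by hand is false. Played against each other, they do most of the work. The bound $h'\ge0$ gives $\W\ge(k+1)(k+2)-2\mult$. The bound $h'\ge h-\binom{k+1}{2}$, i.e.\ $\ee\ge\mult-\binom{k+1}{2}$, together with $\cc\le2\mult$ gives $\W\ge k\bigl(\mult-(k+1)(k+2)/2\bigr)$. Hence $\W>0$ unless $2\mult=(k+1)(k+2)$. At that value, however, both bounds equal $0$ and are simultaneously compatible with $\cc=2\mult$, $h'=0$ and $h=\binom{k+1}{2}$, for every $k\ge1$: $k=2$, $\mult=6$; $k=3$, $\mult=10$; and so on. So the counting is tight along an infinite sequence of parameters. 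In particular, $\mult-h\ge2$ with $k\ge1$ does not force strict inequality, since there $\mult-h=k+1$.

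What eliminates these candidates is additive structure, not counting. The equality conditions say that $A+A$ consists of $\binom{k+1}{2}$ pairwise distinct sums filling exactly the Ap\'ery elements $r+2\mult$ of the gap residues. With $B=\{0\}\cup(A-\mult)$, this means $B+B=[0,\mult-1]$ with all $\binom{|B|+1}{2}$ sums $b_i+b_j$ ($i\le j$) distinct. In other words, $B$ is a perfect additive basis of order $2$.

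The missing ingredient is the paper's Lemma \ref{lem:perfect}: such a $B$ has at most two elements. One is forced to $1\in B$, $2\notin B$, $3\in B$, $4\notin B$, $5\in B$, and then $6=3+3=1+5$ is a double representation. With this lemma, $|B|=2$, so $k=1$, $\mult=3$, $A=\{4\}$ and $S=\langle3,4\rangle$. Without it, no finite hand check closes the argument, because one numerically admissible candidate survives for each $k$.
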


The smallest members of the missing family $T_{\mult,2}=\langle
\mult,2\mult+1,\dots,3\mult-1\rangle$ are $\langle2,5\rangle$ and
$\langle3,7,8\rangle=\{0,3\}\cup[6,\infty)$; the latter has $\mult=3$,
$\FF=5<6=2\mult$, $\nn=2$, $\ee=3$ and $\W=3\cdot2-6=0$. Remark \ref{rmk:kaplan}
identifies the step at which \cite[Proposition 26]{Kaplan} fails. Our proof of Theorem
\ref{thm:intro-kaplan} reduces to a statement about perfect additive bases of order
$2$ (Lemma \ref{lem:perfect}).

Sections \ref{sec:poset}--\ref{sec:equality} branch from Section \ref{sec:defect}:
Section \ref{sec:poset} exploits the redundancy defect, Section \ref{sec:reduction}
the dominance defect, and Section \ref{sec:equality}, through the identity
\eqref{eq:intro-defect}, the vanishing of both. Section \ref{sec:kaplan} is
independent of all three, resting only on the elementary Lemma \ref{lem:family}.
Section \ref{sec:computation} reports the verification and Section
\ref{sec:conclusion} states what remains.

\section{Notation and preliminaries}\label{sec:prelim}

This section fixes notation, records the standard facts used throughout, and computes
the invariants of a family of numerical semigroups that recurs below.

Throughout this paper, $S$ denotes a numerical semigroup and $S^{*}=S\setminus\{0\}$.
We abbreviate $\mult=\mult(S)$, $\FF=\FF(S)$, $\cc=\cc(S)$, $\gnus=\gnus(S)$,
$\LL=\LL(S)$, $\nn=\nn(S)$, $\ee=\ee(S)$, $\typ=\typ(S)$ whenever $S$ is clear from
the context, and we write $G(S)=\N\setminus S$ for the gap set. We exclude
$S=\N$ throughout, so that $\mult\ge2$ and $\FF\ge1$. By definition of the conductor,
$[\cc,\infty)\subseteq S$.

An element $x\in S^{*}$ is \emph{decomposable} if $x=y+z$ with $y,z\in S^{*}$, and
\emph{primitive} otherwise; the primitive elements form the unique minimal system of
generators of $S$, so $\ee$ is their number. If $x\ge\cc+\mult$ then $x-\mult\ge\cc$, so
$x=\mult+(x-\mult)$ is decomposable; all primitives therefore lie in
$[\mult,\cc+\mult)$.

On $\Z$ we use the partial order $a\preceq_S b\iff b-a\in S$, and we write
$a\prec_S b$ when $a\preceq_S b$ and $a\ne b$. The following facts are standard; see
\cite[Ch.~2]{RGS}. We include proofs of (vii) and of (ii), the
latter being used repeatedly below.

\begin{lemma}\label{lem:standard}
Let $S\neq\N$ be a numerical semigroup.
\begin{enumerate}
\item[(i)] $\PF(S)$ is precisely the set of maximal elements of $\bigl(G(S),\preceq_S\bigr)$;
in particular $\FF\in\PF(S)$ and $\typ\ge1$.
\item[(ii)] For every $x\in G(S)$ there exists $f\in\PF(S)$ with $f-x\in S$.
\item[(iii)] $\typ(S)=1$ if and only if $S$ is symmetric, that is, $\gnus=\cc/2$.
\item[(iv)] For every $s\in S^{*}$, writing
$\Ap(S,s)=\{u\in S:\ u-s\notin S\}$ for the \emph{Ap\'ery set} of $S$ with respect to
$s$, one has $\PF(S)=\{w-s:\ w\ \text{maximal in }(\Ap(S,s),\preceq_S)\}$.
\item[(v)] For every $s\in S^{*}$, $|\Ap(S,s)|=s$, and $\Ap(S,s)$ contains exactly one
element of each residue class modulo $s$, namely the least element of $S$ in that
class; in particular $0$ is its only element divisible by $s$, and
$s\notin\Ap(S,s)$.
\item[(vi)] $\max\Ap(S,s)=\FF+s$ for every $s\in S^{*}$.
\item[(vii)] $\ee(S)\ge2$; and if $\ee(S)=2$ then $S$ is symmetric.
\end{enumerate}
\end{lemma}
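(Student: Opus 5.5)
The paper proves only (ii) and (vii) and cites \cite[Ch.~2]{RGS} for the rest, so I plan to prove exactly these two items from the definitions.

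For (ii), I will exploit the finiteness of the gap set. Fix $x\in G(S)$ and consider
\[
D_x=\{y\in G(S):\ y-x\in S\}.
\]
It contains $x$, since $0\in S$, and it is finite because $G(S)$ is. Choose $f\in D_x$ maximal with respect to the usual order of $\Z$. I claim $f\in\PF(S)$. We already have $f\notin S$, so take $s\in S^{*}$ and suppose $f+s\notin S$. Then $(f+s)-x=(f-x)+s\in S$, because $S$ is closed under addition. Hence $f+s\in D_x$, and $f+s>f$ contradicts the maximality of $f$. So $f+s\in S$ for every $s\in S^{*}$, that is, $f\in\PF(S)$, and $f-x\in S$ by construction. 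The argument is just maximality inside a finite up-set, so I expect no obstacle here.

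For (vii), I first show $\ee\ge2$. Since $S\neq\N$, $S$ is not generated by a single element $a$ unless $a=1$. Indeed, $\langle a\rangle=a\N$ has infinite complement when $a\ge2$, and $\langle 1\rangle=\N$ is excluded. Hence at least two primitives are needed.

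Now suppose $\ee=2$, so $S=\langle a,b\rangle$ with $\gcd(a,b)=1$. The cleanest route is through (iii), showing $\typ(S)=1$ by means of the Ap\'ery set from (v) and (iv). Take $s=a$. Every element of $S$ has the form $ia+jb$, and subtracting $a$ keeps us in $S$ whenever $i\ge1$. So $\Ap(S,a)\subseteq\{jb: j\ge0\}$. The residues of $0,b,\dots,(a-1)b$ modulo $a$ are distinct, and by (v) the set $\Ap(S,a)$ has exactly $a$ elements, one per class, so
\[
\Ap(S,a)=\{0,b,2b,\dots,(a-1)b\}.
\]
This set is a chain under $\preceq_S$, because $jb-ib=(j-i)b\in S$ for $i\le j$. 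It therefore has the unique maximal element $(a-1)b$, and (iv) gives $\PF(S)=\{(a-1)b-a\}$, so $\typ=1$.

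To avoid relying on (iii) as a black box, I will also verify $\gnus=\cc/2$ directly via the symmetry $x\in S\iff \FF-x\notin S$ with $\FF=ab-a-b$. The step $x\in S\Rightarrow\FF-x\notin S$ holds because otherwise $\FF\in S$. For the converse, write any integer uniquely as $x=ia+jb$ with $0\le j\le a-1$. Then $x\in S$ if and only if $i\ge0$, and in that case $\FF-x=(-1-i)a+(a-1-j)b$ with first coefficient negative. This pairs gaps with elements of $[0,\FF]$, giving $\gnus=\cc/2$. The only mildly delicate point is the uniqueness of the representation with $0\le j<a$, which follows from $\gcd(a,b)=1$.
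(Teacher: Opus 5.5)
Your proposal is correct. For (ii) and for $\ee\ge2$ you follow the paper's argument. The one small difference in (ii) is that the paper concludes $f\in\PF(S)$ by passing through (i), whereas you read it off the definition of $\PF(S)$ directly, which is slightly cleaner. You depart from the paper only on the symmetry of $\langle a,b\rangle$: the paper gives no argument and simply cites Sylvester's formulas $\FF=ab-a-b$ and $\gnus=(a-1)(b-1)/2$. Your two arguments compare as follows.

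\begin{itemize}
\item Your Ap\'ery route, showing $\Ap(S,a)=\{0,b,\dots,(a-1)b\}$ is a $\preceq_S$-chain and hence $\typ=1$, is genuinely different. It stays inside the framework of (iv)--(v) and yields $\FF=(a-1)b-a$ for free. However, it obtains $\gnus=\cc/2$ only through (iii), which is itself merely cited.
\item Your canonical-representation route is essentially Sylvester's own proof. It is fully self-contained and needs only $\gcd(a,b)=1$.
\end{itemize}

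A few points to tighten:
\begin{itemize}
\item To identify $\Ap(S,a)$ you need the ``least element in each class'' clause of (v), not just $|\Ap(S,a)|=a$. Otherwise you have not excluded $jb$ with $j\ge a$. Alternatively, note directly that $jb-a=(b-1)a+(j-a)b\in S$ for $j\ge a$.
\item In the direct route, the sentence you label ``the converse'' actually re-proves $x\in S\Rightarrow\FF-x\notin S$. The converse is the other half of the same formula: $i\le-1\Rightarrow -1-i\ge0\Rightarrow \FF-x\in S$.
\item The same computation, applied to $x=0$ and to $x<0$, already shows that $ab-a-b$ is the Frobenius number. So you need not assume it, and the preliminary step ``otherwise $\FF\in S$'' becomes redundant.
\item The claim that $x\in S$ forces $i\ge0$ deserves its one-line justification. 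Writing $x=i'a+j'b$ with $i',j'\ge0$ and $j'=qa+j$, you get $x=(i'+qb)a+jb$, so by uniqueness $i=i'+qb\ge0$.
\end{itemize}
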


\begin{proof}[Proof of \textup{(vii)} and \textup{(ii)}]
For (vii): if $\ee(S)=1$ then $S=\langle\mult\rangle$, whose complement in $\N$ is
infinite unless $\mult=1$, that is, unless $S=\N$. The symmetry of
$S=\langle a,b\rangle$ with $\gcd(a,b)=1$ is Sylvester's classical computation of
$\FF=ab-a-b$ and $\gnus=(a-1)(b-1)/2$; see \cite[Ch.~3]{RA}.

For (ii), let $x\in G(S)$ and let $Y=\{y\in G(S):\ y-x\in S\}$. Then $x\in Y$, so
$Y\ne\emptyset$, and $Y$ is finite because $Y\subseteq G(S)$. Let $f=\max Y$. If
$s\in S^{*}$ and $f+s\notin S$,
then $f+s\in G(S)$ and $(f+s)-x=(f-x)+s\in S$, so $f+s\in Y$, contradicting
maximality. Thus $f$ is a maximal element of $(G(S),\preceq_S)$; by (i), $f\in\PF(S)$,
and $f-x\in S$.
\end{proof}

The decomposition of the next section counts the incidences between pseudo-Frobenius
numbers and elements of $S$. We record the two properties of that incidence set which
the count requires.

\begin{lemma}\label{lem:pairs}
Let $\Pi=\{(f,s)\in\PF(S)\times S:\ s\le f\}$.
\begin{enumerate}
\item[(i)] For every $(f,s)\in\Pi$ one has $f-s\in G(S)$.
\item[(ii)] The map $\pi:\Pi\to G(S)$, $\pi(f,s)=f-s$, is surjective.
\end{enumerate}
\end{lemma}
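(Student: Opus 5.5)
Both parts follow directly from the definitions together with Lemma \ref{lem:standard}(ii); the plan is to check (i) by a short contradiction and then obtain (ii) by exhibiting a preimage.

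For (i), I fix $(f,s)\in\Pi$ and first note that $f-s\ge0$ because $s\le f$. It therefore suffices to show $f-s\notin S$. I would argue by contradiction: if $f-s\in S$, then $f=(f-s)+s$ is a sum of two elements of $S$, so $f\in S$ by closure under addition. This contradicts $f\in\PF(S)\subseteq G(S)$. Hence $f-s\in\N\setminus S=G(S)$. The case $s=0$ needs no separate treatment, since it just gives $f-0=f\in G(S)$.

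For (ii), I take an arbitrary gap $x\in G(S)$ and invoke Lemma \ref{lem:standard}(ii). This produces some $f\in\PF(S)$ with $s:=f-x\in S$. Since $x\ge0$, we get $s=f-x\le f$, so $(f,s)\in\Pi$. By construction $\pi(f,s)=f-s=x$, which proves surjectivity.

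Combining the two parts, $\pi$ is a well-defined map into $G(S)$ by (i), and it is onto by (ii). There is no real obstacle here; the only substantive input is the existence of a dominating pseudo-Frobenius number, and that is exactly Lemma \ref{lem:standard}(ii), whose proof was recorded earlier.
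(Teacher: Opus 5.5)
Your proposal is correct and follows the paper's proof. For (i) the paper treats $s=0$ separately, while your single contradiction argument covers that case too; for (ii) both proofs simply invoke Lemma~\ref{lem:standard}(ii), and you additionally check explicitly that the resulting pair $(f,f-x)$ lies in $\Pi$.
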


\begin{proof}
(i) If $s=0$ then $f-s=f\in G(S)$. If $s>0$ and $f-s\in S$, then $f=(f-s)+s\in S$,
contradicting $f\in G(S)$. (ii) is Lemma \ref{lem:standard}(ii).
\end{proof}

Finally we record the invariants of the family \eqref{eq:intro-family}.

\begin{lemma}\label{lem:family}
Let $\mult\ge2$ and $k\ge1$, and let
$T_{\mult,k}=\{0,\mult,\dots,(k-1)\mult\}\cup[k\mult,\infty)$. Then $T_{\mult,k}$ is
a numerical semigroup with
\[
\mult(T_{\mult,k})=\mult,\quad \cc=k\mult,\quad \nn=k,\quad
\ee=\mult,\quad \typ=\mult-1,\quad \W(T_{\mult,k})=0,
\]
and $T_{\mult,k}=\langle \mult,\,k\mult+1,\,k\mult+2,\dots,k\mult+\mult-1\rangle$.
\end{lemma}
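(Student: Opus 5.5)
The plan is to verify each invariant directly from the description $T=T_{\mult,k}=\{0,\mult,\dots,(k-1)\mult\}\cup[k\mult,\infty)$, working in the following order.

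\emph{Semigroup and multiplicity.} First, $T$ is a submonoid. Take $x,y\in T^{*}$. If either one is at least $k\mult$, then so is $x+y$. Otherwise $x=i\mult$ and $y=j\mult$ with $1\le i,j\le k-1$, and then $x+y=(i+j)\mult$ is either $\le(k-1)\mult$ or $\ge k\mult$; in both cases it lies in $T$. The complement is contained in $[1,k\mult)$, so it is finite. Hence $T$ is a numerical semigroup. Since $\mult\ge2$, the least positive element is $\mult$.

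\emph{Conductor and $\nn$.} The gaps are the integers in $[1,k\mult-1]$ not divisible by $\mult$. In particular $k\mult-1$ is a gap, because $\mult\ge2$ means $\mult\nmid k\mult-1$. So $\FF=k\mult-1$ and $\cc=k\mult$. Then $\LL=\{0,\mult,\dots,(k-1)\mult\}$, which gives $\nn=k$.

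\emph{Generators and $\ee$.} Every element of $T$ below $k\mult$ is a multiple of $\mult$. Consequently the elements $k\mult+r$ with $1\le r\le\mult-1$ cannot be written as a sum of two elements of $T^{*}$: such a sum would be either a multiple of $\mult$ or at least $k\mult+\mult$. So these elements are primitive, and so is $\mult$. The generated semigroup $\langle \mult,k\mult+1,\dots,k\mult+\mult-1\rangle$ contains all multiples of $\mult$ and the elements $k\mult+r+j\mult$ for $j\ge0$. It therefore contains $[k\mult,\infty)$, and it is contained in $T$, so it equals $T$. This gives $\ee=\mult$.

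\emph{Type and Wilf number.} By Lemma \ref{lem:standard}(iv) we have $\PF(T)=\{w-\mult:\ w\text{ maximal in }\Ap(T,\mult)\}$. Here $\Ap(T,\mult)=\{0\}\cup\{k\mult+r:1\le r\le\mult-1\}$. No two nonzero elements of this set differ by an element of $T$, since their differences have absolute value less than $\mult$ and are nonzero. So all $\mult-1$ nonzero elements are maximal. As a check, $0$ is not maximal whenever $\mult\ge2$. Therefore $\PF(T)=\{(k-1)\mult+r:1\le r\le\mult-1\}$ and $\typ=\mult-1$. Finally, $\W=\ee\,\nn-\cc=\mult k-k\mult=0$.

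None of these steps is a real obstacle. The only point that needs care is the Apéry-set maximality claim; equivalently, one can check directly that $f=(k-1)\mult+r$ is a gap and that $f+s\ge k\mult$ for every $s\in T^{*}$.
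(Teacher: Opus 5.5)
Your proof is correct and follows the paper's overall plan of reading each invariant off the explicit description of $T_{\mult,k}$. It departs from the paper in two sub-steps.

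For $\ee$, the paper classifies the decomposable elements (all $x\ge(k+1)\mult$, and the multiples $i\mult$ with $2\le i\le k$). It then concludes that the primitives are $\mult,k\mult+1,\dots,k\mult+\mult-1$. You instead show that this candidate set consists of primitives and generates $T$, which forces it to be the minimal generating system. Your version proves the displayed equality $T_{\mult,k}=\langle\mult,k\mult+1,\dots,k\mult+\mult-1\rangle$ explicitly, rather than leaving it to the uniqueness of the minimal system of generators.

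For $\typ$, the paper works straight from the definition. A gap $i\mult+j$ is pseudo-Frobenius when $i=k-1$, since then $i\mult+j+s\ge k\mult$ for all $s\in T^{*}$. It is not when $i\le k-2$, since adding $\mult$ gives another gap. You instead invoke Lemma \ref{lem:standard}(iv) and note that $\Ap(T,\mult)\setminus\{0\}$ is an antichain. This is slicker and foreshadows Section \ref{sec:poset}: every element of an antichain is both minimal and maximal, so Lemma \ref{lem:poset} yields $\ee-1=\typ=\mult-1$ in one stroke. The price is dependence on a fact the paper cites rather than proves, whereas its gap-by-gap check is self-contained.

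Your closing remark beginning \emph{equivalently} gives only the inclusion $\{(k-1)\mult+r:1\le r\le\mult-1\}\subseteq\PF(T)$. To use the direct route in place of the Ap\'ery argument, you also need the paper's observation that the lower gaps are not pseudo-Frobenius.
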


\begin{proof}
Closure under addition is clear: a sum of two multiples of $\mult$ that is less than
$k\mult$ is again such a multiple, and any sum involving an element $\ge k\mult$ is
$\ge k\mult$. As $k\mult$ and $k\mult+1$ both belong to $T_{\mult,k}$, the set
generates $\Z$, so $T_{\mult,k}$ is a numerical semigroup; its least positive element
is $\mult$, and $k\mult-1\notin T_{\mult,k}$ because $\mult\ge2$, whence $\cc=k\mult$
and $\LL=\{0,\mult,\dots,(k-1)\mult\}$ has $k$ elements.

We determine the primitives. Every $x\ge k\mult+\mult$ is decomposable, since
$x=\mult+(x-\mult)$ with $x-\mult\ge k\mult$. Let $x=k\mult+r$ with $0\le r<\mult$ and
suppose $x=a+b$ with $a,b\in T_{\mult,k}^{*}$. If both $a,b<k\mult$ then both are
multiples of $\mult$, so $\mult\mid x$ and hence $r=0$; if instead $a\ge k\mult$, then
$x=a+b\ge k\mult+\mult$, a contradiction. Thus the only element of
$[k\mult,k\mult+\mult)$ that can be
decomposable is $k\mult$ itself, which is decomposable exactly when
$k\ge2$ (as $k\mult=\mult+(k-1)\mult$). Among the elements $i\mult$ with
$1\le i\le k-1$, those with $i\ge2$ are decomposable and $\mult$ is not. Hence the
primitives are $\mult$ together with $k\mult+1,\dots,k\mult+\mult-1$, so $\ee=\mult$.

For the type, the gaps of $T_{\mult,k}$ are exactly the integers
$i\mult+j$ with $0\le i\le k-1$ and $1\le j\le\mult-1$. If $i=k-1$, then for every
$s\in T_{\mult,k}^{*}$ we have $s\ge\mult$ and therefore
$(k-1)\mult+j+s\ge k\mult+j>k\mult$, so $(k-1)\mult+j\in\PF(T_{\mult,k})$. If
$i\le k-2$, then $(i\mult+j)+\mult=(i+1)\mult+j$ with $i+1\le k-1$ and
$1\le j\le\mult-1$, which is again a gap, so $i\mult+j\notin\PF(T_{\mult,k})$.
Hence $\PF(T_{\mult,k})=\{(k-1)\mult+j:1\le j\le\mult-1\}$, so $\typ=\mult-1$ and
$\W=\mult\cdot k-k\mult=0$.
\end{proof}

For $\mult=2$ the family degenerates to $T_{2,k}=\langle2,2k+1\rangle$, which is
$2$-generated and hence symmetric; this is why the case $\mult=2$ is absorbed into the
symmetric case in the classifications proved below.

\section{A defect decomposition of the Wilf number}\label{sec:defect}

The Fr\"oberg--Gottlieb--H\"aggkvist argument bounds the genus by covering the gap set
with the reflected sets $f-\LL$, $f\in\PF(S)$, and discarding two sources of loss: the
elements of $\LL$ that exceed a given pseudo-Frobenius number, and the gaps that the
covering reaches more than once. Restoring both turns \eqref{eq:FGH} into an identity
for the Wilf number.

\begin{definition}\label{def:defects}
Let $S\neq\N$ be a numerical semigroup. For $y\in\Z$ set
$\lam_S(y)=\bigl|S\cap[0,y]\bigr|$, and for every integer $x\in[1,\FF]$ set
\[
\nu_S(x)=\bigl|\{f\in\PF(S):\ f-x\in S\}\bigr| .
\]
Define
\[
\Lam(S)=\sum_{f\in\PF(S)}\lam_S(f),\qquad
\sigma(S)=\sum_{f\in\PF(S)}\bigl|S\cap(f,\FF]\bigr|,\qquad
\Theta(S)=\sum_{x\in G(S)}\bigl(\nu_S(x)-1\bigr),
\]
and call $\sigma(S)$ the \emph{dominance defect} and $\Theta(S)$ the
\emph{redundancy defect} of $S$.
\end{definition}

By Lemma \ref{lem:pairs}(i) one has $\nu_S(x)=0$ for $x\in S\cap[1,\FF]$, and by
Lemma \ref{lem:standard}(ii) one has $\nu_S(x)\ge1$ for $x\in G(S)$; hence
$\Theta(S)\ge0$.

Since $\PF(S)\subseteq[1,\FF]$ and $S\cap[0,f]\subseteq\LL$ for $f\le\FF$, we have
$\lam_S(f)=\bigl|\LL\cap[0,f]\bigr|\le \nn$ for every $f\in\PF(S)$, and therefore
\begin{equation}\label{eq:sigmaLambda}
\sigma(S)=\sum_{f\in\PF(S)}\bigl(\nn-\lam_S(f)\bigr)=\typ\,\nn-\Lam(S)\ \ge\ 0 ,
\end{equation}
the middle equality because $S\cap[0,f]$ and $S\cap(f,\FF]$ partition
$S\cap[0,\FF]=\LL$ for $f\in\PF(S)$.

\begin{proposition}[Defect decomposition]\label{prop:defect}
For every numerical semigroup $S\neq\N$,
\begin{equation}\label{eq:gcount}
\gnus(S)\;=\;\Lam(S)-\Theta(S),
\end{equation}
and consequently
\begin{equation}\label{eq:defect}
\W(S)\;=\;\bigl(\ee(S)-\typ(S)-1\bigr)\,\nn(S)\;+\;\sigma(S)\;+\;\Theta(S).
\end{equation}
\end{proposition}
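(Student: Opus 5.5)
The plan is to prove \eqref{eq:gcount} by double counting the incidence set $\Pi=\{(f,s)\in\PF(S)\times S:\ s\le f\}$ of Lemma \ref{lem:pairs}, and then to derive \eqref{eq:defect} by substituting \eqref{eq:sigmaLambda} and $\cc=\nn+\gnus$.

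First count $\Pi$ by its first coordinate. For fixed $f\in\PF(S)$ the admissible $s$ are exactly the elements of $S\cap[0,f]$, so
\[
|\Pi|=\sum_{f\in\PF(S)}\lam_S(f)=\Lam(S).
\]
Next count $\Pi$ through the map $\pi(f,s)=f-s$. By Lemma \ref{lem:pairs}(i), $\pi$ takes values in $G(S)$, and by Lemma \ref{lem:pairs}(ii) it is onto $G(S)$. For fixed $x\in G(S)$, a pair with $\pi(f,s)=x$ is determined by $f$, since $s=f-x$. The admissible $f$ are precisely the $f\in\PF(S)$ with $f-x\in S$; the condition $s\le f$ is automatic because $x\ge1$. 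Hence $|\pi^{-1}(x)|=\nu_S(x)$, where $x\in[1,\FF]$ as required in the definition, and
\[
\Lam(S)=|\Pi|=\sum_{x\in G(S)}\nu_S(x)=\gnus(S)+\sum_{x\in G(S)}\bigl(\nu_S(x)-1\bigr)=\gnus(S)+\Theta(S).
\]
This is \eqref{eq:gcount}.

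For \eqref{eq:defect}, write $\cc=\nn+\gnus=\nn+\Lam(S)-\Theta(S)$. By \eqref{eq:sigmaLambda}, $\Lam(S)=\typ\,\nn-\sigma(S)$, so
\[
\cc=(\typ+1)\nn-\sigma(S)-\Theta(S).
\]
Therefore $\W=\ee\,\nn-\cc=(\ee-\typ-1)\nn+\sigma(S)+\Theta(S)$, as claimed.

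There is no serious obstacle, since surjectivity is already supplied by Lemma \ref{lem:pairs}(ii). The only point needing care is the fibre count: we must check that the fibre over $x$ is in bijection with $\{f\in\PF(S):f-x\in S\}$, including the check that $s=f-x$ satisfies $0\le s\le f$.
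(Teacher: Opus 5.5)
Your proposal is correct and follows the paper's proof essentially verbatim: double-count $\Pi$ by its first coordinate and through the fibres of $\pi$ over gaps, identify each fibre with $\{f\in\PF(S): f-x\in S\}$ so that it has $\nu_S(x)$ elements, and then substitute $\cc=\nn+\gnus$ and \eqref{eq:sigmaLambda}. The paper also notes explicitly that $\Pi$ is finite, which you use implicitly and which is immediate because $\PF(S)$ and each $S\cap[0,f]$ are finite.
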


\begin{proof}
Let $\Pi$ be as in Lemma \ref{lem:pairs}; it is finite. Counting $\Pi$ by its first
coordinate gives $|\Pi|=\sum_{f\in\PF(S)}\bigl|S\cap[0,f]\bigr|=\Lam(S)$. By Lemma
\ref{lem:pairs} the map $\pi:\Pi\to G(S)$, $\pi(f,s)=f-s$, is well defined and
surjective. Its fibre over a gap $x$ is $\{(f,f-x):\ f\in\PF(S),\ f-x\in S\}$: a pair
$(f,s)\in\Pi$ satisfies $f-s=x$ if and only if $s=f-x$, and then $0\le s\le f$
automatically since $x\ge1$. As $1\le x\le\FF$, that fibre has cardinality $\nu_S(x)$.
Hence
\[
\Lam(S)=|\Pi|=\sum_{x\in G(S)}\nu_S(x)=\gnus(S)+\sum_{x\in G(S)}\bigl(\nu_S(x)-1\bigr)
=\gnus(S)+\Theta(S),
\]
which is \eqref{eq:gcount}. For \eqref{eq:defect}, use $\cc=\nn+\gnus$ and
\eqref{eq:sigmaLambda}:
\[
\W(S)=\ee\,\nn-\cc=\ee\,\nn-\nn-\gnus=(\ee-1)\nn-\Lam(S)+\Theta(S)
=(\ee-1)\nn-\bigl(\typ\,\nn-\sigma(S)\bigr)+\Theta(S),
\]
which is $(\ee-\typ-1)\nn+\sigma(S)+\Theta(S)$.
\end{proof}

\begin{corollary}\label{cor:FGH}
For every numerical semigroup $S\ne\N$,
\begin{equation}\label{eq:sharp}
\cc(S)\;\le\;\bigl(\typ(S)+1\bigr)\nn(S)-\sigma(S)\;\le\;\bigl(\typ(S)+1\bigr)\nn(S).
\end{equation}
Equality holds in the first inequality if and only if $\Theta(S)=0$, and in the second
if and only if $\sigma(S)=0$. In particular $\typ\le\ee-1$ implies $\cc\le\ee\,\nn$.
\end{corollary}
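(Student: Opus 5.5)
The whole corollary will be read off from the identity \eqref{eq:defect} of Proposition \ref{prop:defect}, rewritten so that the conductor is isolated. Since $\W(S)=\ee\,\nn-\cc$, subtracting $\ee\,\nn$ from both sides of \eqref{eq:defect} and changing sign gives
\[
\cc(S)\;=\;\bigl(\typ(S)+1\bigr)\nn(S)-\sigma(S)-\Theta(S).
\]
Equivalently, one can start from \eqref{eq:gcount} and \eqref{eq:sigmaLambda}, which give $\gnus=\typ\,\nn-\sigma-\Theta$, and then add $\nn$ using $\cc=\nn+\gnus$.

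Both inequalities in \eqref{eq:sharp} then come from dropping nonnegative terms. We already know $\Theta(S)\ge0$, since every gap is dominated by some pseudo-Frobenius number by Lemma \ref{lem:standard}(ii), so $\nu_S(x)\ge1$ on $G(S)$. We also know $\sigma(S)\ge0$ by \eqref{eq:sigmaLambda}.

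The displayed identity shows that $\cc=(\typ+1)\nn-\sigma$ holds exactly when $\Theta=0$. Likewise $(\typ+1)\nn-\sigma=(\typ+1)\nn$ holds exactly when $\sigma=0$. These are the two equality characterizations, and they are tautological once the identity is in hand.

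For the final assertion, assume $\typ\le\ee-1$, so that $\typ+1\le\ee$. Then $\cc\le(\typ+1)\nn\le\ee\,\nn$, because $\nn\ge1$ (indeed $0\in\LL$). Alternatively, \eqref{eq:defect} directly gives $\W\ge0$ as a sum of nonnegative terms.

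I expect no real obstacle here, since all the work was done in Proposition \ref{prop:defect}. The only points needing care are keeping the signs straight when solving for $\cc$, and noting that each equality statement refers to its own inequality separately and not to both inequalities together.
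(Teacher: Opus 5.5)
Your proposal is correct and is essentially the paper's proof. The paper also writes $\cc=(\typ+1)\nn-\sigma(S)-\Theta(S)$ using \eqref{eq:gcount} and \eqref{eq:sigmaLambda}, which is your stated alternative route, and then drops the two nonnegative defects; your explicit handling of the equality cases and of the final clause via $\nn\ge1$ only spells out what the paper leaves implicit.
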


\begin{proof}
By \eqref{eq:gcount} and \eqref{eq:sigmaLambda},
$\cc=\nn+\gnus=\nn+\Lam-\Theta=(\typ+1)\nn-\sigma-\Theta$, and both $\sigma$ and
$\Theta$ are nonnegative.
\end{proof}

The outer inequality in \eqref{eq:sharp} is \eqref{eq:FGH}, equivalently
$\gnus\le\typ\,\nn$; the first inequality strictly refines it whenever $\sigma(S)>0$.
Explicitly, \eqref{eq:gcount} and \eqref{eq:sigmaLambda} say that
\begin{equation}\label{eq:sandwich}
\gnus(S)\;\le\;\Lam(S)\;\le\;\typ(S)\,\nn(S),\qquad
\Theta(S)=\Lam(S)-\gnus(S),\quad \sigma(S)=\typ(S)\,\nn(S)-\Lam(S).
\end{equation}
The lower bound is the surjectivity of $\pi$, and the upper bound is
$\lam_S(f)\le\nn$; \eqref{eq:FGH} is their combination. The defects
$\sigma$ and $\Theta$ measure distinct phenomena: $\sigma(S)=0$ says that
every pseudo-Frobenius number exceeds all elements of $\LL(S)$, whereas
$\Theta(S)=0$ says that the covering of $G(S)$ furnished by Lemma
\ref{lem:standard}(ii) is a partition.

\begin{example}\label{ex:small}
Let $S=\langle3,5,7\rangle$. Since $5,6=3+3$ and $7$ lie in $S$ and $s\in S$ implies
$s+3\in S$, every integer $\ge5$ lies in $S$; as $1,2,4\notin S$, this gives
$S=\{0,3\}\cup[5,\infty)$, so $\gnus=3$, $\FF=4$, $\cc=5$, $\LL=\{0,3\}$, $\nn=2$,
$\ee=3$ and $\PF(S)=\{2,4\}$, whence $\typ=2$. Now $\lam_S(2)=1$ and $\lam_S(4)=2$,
so $\Lam(S)=3$, $\sigma(S)=\typ\,\nn-\Lam(S)=1$ and $\Theta(S)=\Lam(S)-\gnus=0$ by
\eqref{eq:gcount}. Formula \eqref{eq:defect} gives $\W(S)=(3-2-1)\cdot2+1+0=1$, in
agreement with $\ee\,\nn-\cc=3\cdot2-5=1$.
\end{example}

\section{The Ap\'ery poset and an upper bound for the type}\label{sec:poset}

Throughout this section we write
\[
Q(S)\;=\;\bigl(\Ap(S,\mult)\setminus\{0\},\ \preceq_S\bigr),
\]
the \emph{Ap\'ery poset} of $S$, which has $\mult-1$ elements. Its maximal elements
number $\typ$ and its minimal elements $\ee-1$, so the passage from $\typ$ to $\ee-1$
is a poset-theoretic estimate, whose loss the section bounds.

The construction does not single out the multiplicity. We therefore define the poset
for an arbitrary base point: for $s\in S^{*}$ write
$Q_s(S)=\bigl(\Ap(S,s)\setminus\{0\},\preceq_S\bigr)$, so that $Q(S)=Q_\mult(S)$. For
$s=\mult$ both descriptions below are standard, that of $\Max Q(S)$ being Lemma
\ref{lem:standard}(iv) and that of $\Min Q(S)$ the usual passage from the Ap\'ery set
to the minimal system of generators, see \cite[Ch.~2]{RGS}; since the general base point
costs nothing and the two are used jointly, we prove them together.

\begin{lemma}\label{lem:poset}
Let $S\ne\N$ be a numerical semigroup, let $P$ be its set of primitive elements, and
let $s\in S^{*}$. Then
\[
\Min Q_s(S)=\{p\in P:\ p\ne s,\ p-s\notin S\},\qquad \Max Q_s(S)=\PF(S)+s,
\]
where $\Min$ and $\Max$ denote the sets of minimal and of maximal elements. For
$s=\mult$ this reads
\[
\Min Q(S)=P\setminus\{\mult\},\qquad \Max Q(S)=\PF(S)+\mult,
\]
so that $|\Min Q(S)|=\ee(S)-1$ and $|\Max Q(S)|=\typ(S)$.
\end{lemma}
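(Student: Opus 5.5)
The plan is to treat the two descriptions separately. Both rest on the characterization in Lemma \ref{lem:standard}(v): $\Ap(S,s)$ consists of the elements $u\in S$ with $u-s\notin S$. Throughout, $\preceq_S$ is restricted to $\Ap(S,s)\setminus\{0\}$.

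\emph{Maximal elements.} By Lemma \ref{lem:standard}(iv), the maximal elements of $(\Ap(S,s),\preceq_S)$ are exactly the elements $f+s$ with $f\in\PF(S)$. It remains to check that removing $0$ changes nothing. Since $\PF(S)\subseteq[1,\FF]$, no element $f+s$ equals $0$. Conversely, $0$ is not maximal in $\Ap(S,s)$, because $\FF+s\in\Ap(S,s)$ by Lemma \ref{lem:standard}(vi) and $\FF+s\ne0$. Maximality of an element $w\neq0$ is unaffected by deleting the bottom element $0$. Hence $\Max Q_s(S)=\PF(S)+s$.

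\emph{Minimal elements.} First, if $w\in\Ap(S,s)$, $w'\in S^{*}$ and $w-w'\in S$, then $w'\in\Ap(S,s)$. Indeed, if $w'-s\in S$, then $w-s=(w-w')+(w'-s)\in S$, which is impossible. So the elements of $Q_s(S)$ below $w$ are exactly the $w'\in S^{*}$ with $w-w'\in S$.

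It follows that $w\in\Ap(S,s)\setminus\{0\}$ is minimal if and only if there is no $w'\in S^{*}$ with $w-w'\in S^{*}$, that is, if and only if $w$ is primitive. Therefore $\Min Q_s(S)=P\cap\Ap(S,s)=\{p\in P:\ p-s\notin S\}$. The condition $p\neq s$ is automatic, since $s\notin\Ap(S,s)$ by Lemma \ref{lem:standard}(v); it can be recorded explicitly because the set is written with $p-s\notin S$, and $s-s=0\in S$ already excludes it. Conversely, every primitive $p$ with $p-s\notin S$ lies in $\Ap(S,s)\setminus\{0\}$.

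\emph{Specialization to $s=\mult$.} For every $p\in P\setminus\{\mult\}$ we have $\mult<p$, and $p-\mult\notin S$. Otherwise $p=\mult+(p-\mult)$ would be decomposable, since $p-\mult>0$. Hence $\Min Q(S)=P\setminus\{\mult\}$, which has $\ee-1$ elements. Since $f\mapsto f+\mult$ is injective, $|\Max Q(S)|=\typ$.

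The only step needing care is the downward-closure claim that any $w'\in S^{*}$ below an Apéry element is itself Apéry. This claim is what lets "minimal in $Q_s$" coincide with "primitive"; once it is established, the rest is bookkeeping.
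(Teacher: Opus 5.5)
Your proof is correct. For the maximal elements and for the specialization to $s=\mult$ it is essentially the paper's argument, but for the minimal elements you take a different and more economical route.

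The paper starts from a decomposition $w=x+y$ of $w\in\Ap(S,s)\setminus\{0\}$ and argues in three steps:
\begin{enumerate}
\item[(a)] not both summands are divisible by $s$, so it picks one with $s\nmid x$;
\item[(b)] it replaces $x$ by the element $x'$ of $\Ap(S,s)$ in the same residue class;
\item[(c)] it uses the least-element description in Lemma \ref{lem:standard}(v) to get $x'\le x$ and hence $w-x'=(x-x')+y\in S$.
\end{enumerate}

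You instead prove directly that $\Ap(S,s)$ is closed downward in $(S,\preceq_S)$. This uses only the defining condition $u-s\notin S$, and it makes the summand $x$ itself an element of $Q_s(S)$ lying strictly below $w$. It also shows that the paper's passage to $x'$ is vacuous: any summand of an Ap\'ery element is already an Ap\'ery element, so $x'=x$ and in fact neither summand is divisible by $s$.

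Your closure statement also gives a cleaner intermediate fact. The elements of $Q_s(S)$ below $w$ are exactly the $\preceq_S$-predecessors of $w$ in $S^{*}$, so minimal in $Q_s(S)$ is equivalent to primitive in both directions at once. The paper's version gains nothing extra here, beyond staying within the residue-class picture it reuses in the proof of Theorem \ref{thm:type}.

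One small slip: the characterization $\Ap(S,s)=\{u\in S:\ u-s\notin S\}$ is the definition recorded in Lemma \ref{lem:standard}(iv), not (v).
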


\begin{proof}
A primitive $p$ with $p\ne s$ and $p-s\notin S$ lies in $\Ap(S,s)$ by definition, and
$p\ne0$; and $s\notin\Ap(S,s)$ by Lemma \ref{lem:standard}(v). So the displayed
set is contained in $\Ap(S,s)\setminus\{0\}$.

Let $w\in\Ap(S,s)\setminus\{0\}$. If $w$ is decomposable, write $w=x+y$ with
$x,y\in S^{*}$. Not both $x$ and $y$ are divisible by $s$: otherwise $s\mid w$,
whereas by Lemma \ref{lem:standard}(v) the only element of $\Ap(S,s)$ divisible by $s$
is $0$. Say $s\nmid x$, and let $x'$ be the element of $\Ap(S,s)$ congruent to $x$
modulo $s$; by Lemma \ref{lem:standard}(v) it is the least element of $S$ in that
class, so $x'\le x$, and $x'\ne0$ since $s\nmid x$. Then $w-x'=(x-x')+y\in S$, because
$x-x'$ is a nonnegative multiple of $s$ and $y\in S$; moreover $w-x'\ge y>0$, so
$x'\prec_S w$ and $w$ is not minimal in $Q_s(S)$. Conversely, if $w$ is not minimal,
then $w=z+(w-z)$ for some $z\in\Ap(S,s)\setminus\{0\}$ with $w-z\in S^{*}$, so $w$ is
decomposable. Hence the minimal elements of $Q_s(S)$ are exactly the primitive
elements lying in $\Ap(S,s)\setminus\{0\}$, which is the asserted set.

The description of $\Max Q_s(S)$ is Lemma \ref{lem:standard}(iv); and $0$ is not
maximal, since $s\ge\mult\ge2$ forces $\Ap(S,s)\ne\{0\}$ and $0\preceq_S w$ for every
$w$. Finally, for $s=\mult$ every primitive $p\ne\mult$ satisfies $p-\mult\notin S$,
since otherwise $p=\mult+(p-\mult)$ would be decomposable; so
$\Min Q(S)=P\setminus\{\mult\}$, a set of $\ee-1$ elements.
\end{proof}

Bounding the number of maximal elements of $Q_s(S)$ by the number of its minimal ones
gives one inequality for every base point.

\begin{proposition}\label{prop:general}
Let $S\ne\N$ be a numerical semigroup. For every $s\in S^{*}$,
\begin{equation}\label{eq:general}
\typ(S)\;\le\;\sum_{p\in\Min Q_s(S)}\bigl|\{f\in\PF(S):\ f+s-p\in S\}\bigr| .
\end{equation}
\end{proposition}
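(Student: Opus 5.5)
We prove \eqref{eq:general} by a double count in the finite poset $Q_s(S)$: each maximal element dominates at least one minimal element, so the number of maximal elements is at most the number of (minimal, maximal) comparable pairs. By Lemma \ref{lem:poset}, $\Max Q_s(S)=\PF(S)+s$, and the map $f\mapsto f+s$ is a bijection $\PF(S)\to\Max Q_s(S)$. Therefore $\typ(S)=|\Max Q_s(S)|$.

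The first step is to show that every $w\in\Max Q_s(S)$ lies above some $p\in\Min Q_s(S)$, that is, $p\preceq_S w$. Since $Q_s(S)$ is a finite poset (it has $s-1$ elements by Lemma \ref{lem:standard}(v)), we may descend from $w$ along strictly decreasing elements of $Q_s(S)$. Every strict step $w'\prec_S w''$ with both elements in $Q_s(S)$ satisfies $w'<w''$ as integers, so the descent terminates at a minimal element $p$. Transitivity of $\preceq_S$ holds because $S$ is closed under addition, so $p\preceq_S w$.

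The second step is the counting. Consider the set of pairs
\[
R=\{(p,f)\in\Min Q_s(S)\times\PF(S):\ p\preceq_S f+s\}=\{(p,f):\ f+s-p\in S\}.
\]
By the first step, the projection of $R$ onto the second coordinate is surjective onto $\PF(S)$. Hence $\typ(S)\le|R|$. Counting $R$ by its first coordinate gives $|R|=\sum_{p\in\Min Q_s(S)}|\{f\in\PF(S):\ f+s-p\in S\}|$, which is exactly the right-hand side of \eqref{eq:general}.

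No step is a real obstacle. The only point needing care is that the comparability $p\preceq_S f+s$ in $Q_s(S)$ is literally the condition $f+s-p\in S$, so that the relation used to define $R$ matches the relation in the poset. This holds by the definition of $\preceq_S$ on $\Z$, restricted to $\Ap(S,s)\setminus\{0\}$.
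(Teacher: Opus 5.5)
Your proof is correct and is essentially the paper's argument. The paper picks, for each maximal element $w$, one minimal element $\mu(w)\preceq_S w$ and bounds each fibre of $\mu$ by $\{w\in\Max Q_s(S):\ q\preceq_S w\}$. Your bound of $\typ(S)$ by the number of comparable (minimal, maximal) pairs, identified with the right-hand side via Lemma \ref{lem:poset}, is the same count; your descent argument simply spells out why every element of the finite poset lies above a minimal element, which the paper uses without proof.
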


\begin{proof}
The poset $Q_s(S)$ is finite and nonempty, since $|\Ap(S,s)|=s\ge\mult\ge2$ by Lemma
\ref{lem:standard}(v); hence $\Min Q_s(S)\ne\emptyset$. Every element of a finite
poset lies above at least one
minimal element, so we may choose, for each $w\in\Max Q_s(S)$, an element
$\mu(w)\in\Min Q_s(S)$ with $\mu(w)\preceq_S w$. Counting the fibres of $\mu$,
\[
\typ(S)=\bigl|\Max Q_s(S)\bigr|
=\sum_{q\in\Min Q_s(S)}\bigl|\mu^{-1}(q)\bigr|
\;\le\;\sum_{q\in\Min Q_s(S)}\bigl|\{w\in\Max Q_s(S):\ q\preceq_S w\}\bigr|,
\]
using $|\Max Q_s(S)|=\typ(S)$ from Lemma \ref{lem:poset}. For $p\in\Min Q_s(S)$ the
same lemma gives $\{w\in\Max Q_s(S):p\preceq_S w\}=\{f+s:\ f\in\PF(S),\ f+s-p\in S\}$,
whose cardinality is the $p$-th summand of \eqref{eq:general}.
\end{proof}

The base point $s=\mult$ is distinguished: it is the only one for which every minimal
element $p$ of $Q_s(S)$ satisfies $p>s$, since for $s\ne\mult$ the multiplicity itself
lies in $\Min Q_s(S)$ and is smaller than $s$. For $s=\mult$, therefore, $p-s$ is a
gap, the summands of \eqref{eq:general} become values of $\nu_S$, and
\eqref{eq:general} becomes a bound by the redundancy defect. Less than $\Theta(S)$ is
needed, by the following monotonicity.

\begin{lemma}\label{lem:numono}
Let $x\in G(S)$ and $s\in S$ with $x-s\ge1$. Then $x-s\in G(S)$ and
$\nu_S(x-s)\ge\nu_S(x)$.
\end{lemma}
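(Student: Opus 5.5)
The plan is to argue directly from the definitions, in two short steps.

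\emph{First step: $x-s$ is a gap.} We have $x-s\ge1$, and $x\in G(S)$ with $s\in S$. If $x-s$ lay in $S$, then $x=(x-s)+s$ would lie in $S$, a contradiction. Hence $x-s\in G(S)$. Since $x-s\ge1$ and $x-s\le x\le\FF$, the value $\nu_S(x-s)$ is defined.

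\emph{Second step: monotonicity by inclusion.} I would prove $\nu_S(x-s)\ge\nu_S(x)$ by comparing the two sets being counted:
\[
\{f\in\PF(S):\ f-x\in S\}\ \subseteq\ \{f\in\PF(S):\ f-(x-s)\in S\}.
\]
Take $f\in\PF(S)$ with $f-x\in S$. Then
\[
f-(x-s)=(f-x)+s,
\]
which is a sum of two elements of $S$. Since $S$ is closed under addition, this lies in $S$, so $f$ belongs to the right-hand set. Taking cardinalities gives the inequality.

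\emph{The case $s=0$.} This is harmless, since then both sides coincide. So no separate case split is needed.

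\emph{Obstacles.} There is no real obstacle here. The only point needing care is that $\nu_S$ is defined only on $[1,\FF]$. That is why the hypothesis $x-s\ge1$ is imposed, and the first step already settles it.

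I would also remark that iterating the lemma with $s=\mult$ shows $\nu_S$ is nondecreasing along each descending chain $x,x-\mult,x-2\mult,\dots$ of gaps. This is the form used afterwards to pass from $\Theta(S)$ to the weighted quantity $\Xii(S)$.
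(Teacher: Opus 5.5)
Your proposal is correct and follows the paper's proof essentially verbatim: $x-s\in S$ would force $x=(x-s)+s\in S$, and the inclusion of the counted sets of pseudo-Frobenius numbers comes from $f-(x-s)=(f-x)+s\in S$. Your extra remarks on the case $s=0$ and on $\nu_S(x-s)$ being defined because $1\le x-s\le\FF$ are accurate, though the paper leaves them implicit.
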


\begin{proof}
If $x-s$ lay in $S$ then $x=(x-s)+s$ would lie in $S$, so $x-s$ is a gap. If
$f\in\PF(S)$ satisfies $f-x\in S$, then $f-(x-s)=(f-x)+s\in S$; thus every
pseudo-Frobenius number counted by $\nu_S(x)$ is counted by $\nu_S(x-s)$.
\end{proof}

The monotonicity of Lemma \ref{lem:numono} lets one weight each element $w$ of the
Ap\'ery set by the number $\lfloor w/\mult\rfloor$ of gaps below it in its residue
class.

\begin{theorem}\label{thm:type}
Let $S\ne\N$ be a numerical semigroup, let $P$ be its set of primitive elements, and
put
\begin{equation}\label{eq:xi}
\Xii(S)\;=\;\sum_{w\in\Ap(S,\mult)\setminus\{0\}}
\Bigl\lfloor \tfrac{w}{\mult}\Bigr\rfloor\bigl(\nu_S(w-\mult)-1\bigr).
\end{equation}
Then
\begin{equation}\label{eq:typebound}
\typ(S)\;\le\;\sum_{p\in P\setminus\{\mult\}}\nu_S(p-\mult)
\;\le\;\ee(S)-1+\Xii(S)\;\le\;\ee(S)-1+\Theta(S).
\end{equation}
\end{theorem}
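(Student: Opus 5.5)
The first inequality is Proposition \ref{prop:general} with base point $s=\mult$. By Lemma \ref{lem:poset}, $\Min Q(S)=P\setminus\{\mult\}$. For each such $p$ we have $p-\mult\notin S$, and $p-\mult\ge1$ because every primitive other than $\mult$ exceeds $\mult$. So $p-\mult$ is a gap, hence lies in $[1,\FF]$. The summand $|\{f\in\PF(S):f+\mult-p\in S\}|$ is then exactly $|\{f:f-(p-\mult)\in S\}|=\nu_S(p-\mult)$, and the first inequality follows.

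For the second inequality, write $\nu_S(p-\mult)=1+(\nu_S(p-\mult)-1)$. Summing over the $\ee-1$ primitives $p\ne\mult$ reduces the claim to
\[
\sum_{p\in P\setminus\{\mult\}}\bigl(\nu_S(p-\mult)-1\bigr)\le\Xii(S).
\]
Each $p\in P\setminus\{\mult\}$ lies in $\Ap(S,\mult)\setminus\{0\}$ and satisfies $p>\mult$, so $\lfloor p/\mult\rfloor\ge1$. Every term $\lfloor w/\mult\rfloor(\nu_S(w-\mult)-1)$ of $\Xii(S)$ is nonnegative: for $w\in\Ap(S,\mult)\setminus\{0\}$ with $w>\mult$, $w-\mult$ is a gap, so $\nu_S\ge1$ by Lemma \ref{lem:standard}(ii). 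For $w<\mult$ the floor vanishes, which needs a remark: if $w<\mult$, then $w$ is a positive element of $S$ below $\mult$, which is impossible, so in fact every $w\in\Ap(S,\mult)\setminus\{0\}$ exceeds $\mult$. Hence the primitives contribute at least their own terms with weight $\ge1$, the remaining terms are nonnegative, and the inequality follows.

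For the third inequality, compare $\Xii(S)$ with $\Theta(S)$ residue class by residue class. Fix $w\in\Ap(S,\mult)\setminus\{0\}$ and let $k=\lfloor w/\mult\rfloor$. The integers $w-j\mult$ for $j=1,\dots,k$ are all $\ge w-k\mult\ge1$ (since $w\not\equiv0$), and they are gaps because $w$ is the least element of $S$ in its class. This gives $k$ distinct gaps in the class of $w$. Applying Lemma \ref{lem:numono} iteratively with $s=\mult$, we get $\nu_S(w-j\mult)\ge\nu_S(w-\mult)$ for $1\le j\le k$. Summing,
\[
k\bigl(\nu_S(w-\mult)-1\bigr)\le\sum_{j=1}^{k}\bigl(\nu_S(w-j\mult)-1\bigr).
\]
The gap sets attached to distinct $w$ lie in distinct residue classes, so they are disjoint. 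Every summand $\nu_S(x)-1$ of $\Theta$ is nonnegative. Summing over $w$ therefore gives $\Xii(S)\le\Theta(S)$; indeed these gaps exhaust $G(S)$, so equality of index sets holds, though it is not needed.

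The main obstacle is the second step: confirming that every term of $\Xii$ is nonnegative and that the primitives carry weight $\ge1$. Both rest on $\Ap(S,\mult)\setminus\{0\}\subseteq(\mult,\infty)$ together with Lemma \ref{lem:standard}(ii).
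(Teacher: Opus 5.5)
Your proposal is correct and follows the paper's proof step by step: Proposition \ref{prop:general} at base point $s=\mult$ with Lemma \ref{lem:poset} gives the first inequality, nonnegativity of every term of $\Xii(S)$ together with the weight $\lfloor p/\mult\rfloor\ge1$ on the primitives gives the second, and the residue-class comparison via Lemma \ref{lem:numono} gives the third. The differences are cosmetic: you get $p-\mult\le\FF$ from $p-\mult$ being a gap rather than from $\max\Ap(S,\mult)=\FF+\mult$, and you rely on nonnegativity of the summands of $\Theta(S)$ where the paper notes that the chains exhaust $G(S)$.
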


\begin{proof}
Take $s=\mult$ in Proposition \ref{prop:general}. By Lemma \ref{lem:poset},
$\Min Q(S)=P\setminus\{\mult\}$, and for such a $p$ one has $p-\mult\notin S$ and
$p>\mult$, since $p\in S^{*}$ forces $p\ge\mult$ while $p\ne\mult$. As
$\max\Ap(S,\mult)=\FF+\mult$ by Lemma \ref{lem:standard}(vi), we get
$1\le p-\mult\le\FF$, so $p-\mult$ is a gap of $S$, and
\[
p\preceq_S f+\mult\iff f+\mult-p\in S\iff f-(p-\mult)\in S .
\]
Hence the $p$-th summand of \eqref{eq:general} equals $\nu_S(p-\mult)$, which is the
first inequality in \eqref{eq:typebound}.

For the third, fix $w\in\Ap(S,\mult)\setminus\{0\}$ and put $k=\lfloor
w/\mult\rfloor$. By Lemma \ref{lem:standard}(v), $\mult\nmid w$ and $w$ is the least
element of $S$ in its class modulo $\mult$; hence $w-\mult,w-2\mult,\dots,w-k\mult$
are $k$ distinct gaps of $S$, and by Lemma \ref{lem:numono},
$\nu_S(w-j\mult)\ge\nu_S(w-\mult)$ for $1\le j\le k$. Distinct $w$ give gaps in
distinct residue classes, and every gap arises this way, so
\[
\Theta(S)=\sum_{x\in G(S)}\bigl(\nu_S(x)-1\bigr)
\;\ge\;\sum_{w\in\Ap(S,\mult)\setminus\{0\}}
k_w\bigl(\nu_S(w-\mult)-1\bigr)=\Xii(S),
\]
where $k_w=\lfloor w/\mult\rfloor$; this is the third inequality. For the second,
$P\setminus\{\mult\}\subseteq\Ap(S,\mult)\setminus\{0\}$ and $k_p\ge1$ for such $p$,
while all summands in \eqref{eq:xi} are nonnegative, so
\begin{equation*}
\sum_{p\in P\setminus\{\mult\}}\nu_S(p-\mult)
=(\ee-1)+\sum_{p\in P\setminus\{\mult\}}\bigl(\nu_S(p-\mult)-1\bigr)
\le(\ee-1)+\Xii(S).
\end{equation*}
\end{proof}

Theorem \ref{thm:type} makes $\Xii(S)$, and a fortiori $\Theta(S)$, the necessary cost
of leaving the FGH range: $\typ>\ee-1$ forces $\Xii(S)\ge\typ+1-\ee$. The converse
fails: both quantities may be large while $S$ lies well inside the FGH range.

\begin{example}\label{ex:theta}
Let $S=\{0\}\cup[15,23]\cup[30,\infty)$. This is a numerical semigroup: a sum of two
elements of $[15,23]$ lies in $[30,46]$, and any sum involving an element of
$[30,\infty)$ is again at least $30$. Its gaps are $[1,14]\cup[24,29]$, so
$\gnus=20$, $\FF=29$, $\cc=30$ and $\nn=10$. Every element of $[30,44]$ is a sum of
two elements of $[15,23]$, so the primitives are exactly $15,\dots,23$ and $\ee=9$.
A gap $f$ lies in $\PF(S)$ if and only if $f\ge24$: for $f\in[24,29]$ every
$s\in S^{*}$ satisfies $s\ge15$, so $f+s\ge39$ and $f+s\in S$; whereas for
$f\in[1,14]$ the interval $[f+15,f+23]$ meets $[24,29]$, because $f+15\le29$ and
$f+23\ge24$, so some $s\in[15,23]\subseteq S^{*}$ makes $f+s$ a gap. Hence $\typ=6$.
Finally $\lam_S(f)=\nn=10$ for each of the six $f\in\PF(S)$, so $\Lam(S)=60$ and
$\Theta(S)=\Lam(S)-\gnus=40$ by \eqref{eq:gcount}, while $\typ=6\le8=\ee-1$.
\end{example}

\begin{corollary}\label{cor:theta0}
Let $S\ne\N$ be a numerical semigroup with $\Theta(S)=0$. Then $\typ(S)\le\ee(S)-1$,
and consequently $S$ satisfies Wilf's conjecture.
\end{corollary}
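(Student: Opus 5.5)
This follows directly from Theorem \ref{thm:type} and Corollary \ref{cor:FGH}. First, the bound on the type. If $\Theta(S)=0$, the outer inequality of \eqref{eq:typebound} reads
\[
\typ(S)\le\ee(S)-1+\Xii(S)\le\ee(S)-1+\Theta(S)=\ee(S)-1 .
\]
One can also see this without $\Xii$. The vanishing of $\Theta(S)=\sum_{x\in G(S)}(\nu_S(x)-1)$, whose summands are nonnegative by Lemma \ref{lem:standard}(ii), forces $\nu_S(x)=1$ for every gap $x$. In particular $\nu_S(p-\mult)=1$ for each $p\in P\setminus\{\mult\}$, since $p-\mult$ is a gap (as shown in the proof of Theorem \ref{thm:type}). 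The first inequality of \eqref{eq:typebound} then gives $\typ(S)\le|P\setminus\{\mult\}|=\ee(S)-1$ directly.

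Second, Wilf's conjecture. Since $\typ\le\ee-1$, the final clause of Corollary \ref{cor:FGH} gives $\cc\le(\typ+1)\nn\le\ee\,\nn$. Equivalently, in \eqref{eq:defect} we have
\[
\W(S)=(\ee-\typ-1)\nn+\sigma(S)+0\ge0,
\]
because $\ee-\typ-1\ge0$, $\nn\ge1$ and $\sigma(S)\ge0$ by \eqref{eq:sigmaLambda}.

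There is no real obstacle here. The only point to check is that each summand of $\Theta$ is nonnegative, so that $\Theta=0$ forces $\nu_S\equiv1$ on $G(S)$ and the chain in Theorem \ref{thm:type} collapses to $\typ\le\ee-1$. This nonnegativity is already recorded after Definition \ref{def:defects}.
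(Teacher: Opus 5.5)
Your proof is correct and follows the paper's argument: you read $\typ(S)\le\ee(S)-1$ off \eqref{eq:typebound} with $\Theta(S)=0$, and then Wilf's inequality follows from the last clause of Corollary~\ref{cor:FGH}. Your alternative route is also valid and matches the paper's own remark after Corollary~\ref{cor:master}: $\Theta(S)=0$ forces $\nu_S\equiv1$ on $G(S)$, and the first inequality of \eqref{eq:typebound} then collapses to $\typ(S)\le\ee(S)-1$.
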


\begin{proof}
The first assertion is \eqref{eq:typebound}; the second then follows from
\eqref{eq:sharp}, whose last clause gives $\cc\le\ee\,\nn$ when $\typ\le\ee-1$.
\end{proof}

By Corollary \ref{cor:FGH} the hypothesis $\Theta(S)=0$ is equality in the first
inequality of \eqref{eq:sharp}, so the content of Corollary \ref{cor:theta0} is that
this locus lies inside the FGH range, where Conjecture \ref{conj:wilf} was already
known.

A semigroup outside the FGH range illustrates the whole of \eqref{eq:typebound}.

\begin{example}\label{ex:poset}
Let $S=\langle9,10,12,13\rangle$, so that $\mult=9$, $\LL=\{0,9,10,12,13\}$,
$\nn=5$, $\cc=18$ and $\gnus=13$. Here
\[
\Ap(S,9)=\{0,10,12,13,20,23,24,25,26\},
\]
and $Q(S)$ is this set with $0$ removed. Its minimal elements are $10,12,13$, which
are precisely the primitive elements of $S$ other than $\mult$, so $|\Min Q(S)|=3=\ee-1$;
its maximal elements are $20,23,24,25,26$, and subtracting $\mult$ gives
$\PF(S)=\{11,14,15,16,17\}$, so $|\Max Q(S)|=5=\typ$. Thus $\typ>\ee-1$: this
semigroup lies outside the FGH range, and accordingly both defects are positive,
$\sigma(S)=2$ and $\Theta(S)=10$. The three gaps $p-\mult$ attached to the minimal
elements are $1,3,4$, with $\nu_S(1)=2$, $\nu_S(3)=2$ and $\nu_S(4)=3$. The remaining
elements $20,23,24,25,26$ of $\Ap(S,9)$ have $\lfloor w/9\rfloor=2$ and
$\nu_S(w-9)=1$, so they contribute nothing to $\Xii(S)$, whereas $10,12,13$
contribute $1,1,2$; thus $\Xii(S)=4$ and \eqref{eq:typebound} reads
\[
5\;\le\;2+2+3=7\;\le\;3+4=7\;\le\;3+10=13 .
\]
Here the second inequality in \eqref{eq:typebound} is an equality, while
$\ee-1+\Theta$ exceeds the middle term by $6$.
\end{example}

Substituting \eqref{eq:sandwich} into Theorem \ref{thm:type} turns it into a bound on
the genus.

\begin{corollary}\label{cor:master}
For every numerical semigroup $S\ne\N$,
\begin{equation}\label{eq:master}
\gnus(S)+\sigma(S)\;\le\;\ee(S)-1+\typ(S)\bigl(\nn(S)-1\bigr).
\end{equation}
In particular $\gnus(S)\le\ee(S)-1+\typ(S)\bigl(\nn(S)-1\bigr)$, and this bound is
strictly stronger than the Fr\"oberg--Gottlieb--H\"aggkvist bound
$\gnus(S)\le\typ(S)\,\nn(S)$ if and only if $\typ(S)\ge\ee(S)$.
\end{corollary}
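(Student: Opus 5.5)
We begin from the outer form of Theorem \ref{thm:type}, namely $\typ\le\ee-1+\Theta(S)$, and rewrite $\Theta(S)$ using \eqref{eq:sandwich}. There $\Theta=\Lam-\gnus$ and $\Lam=\typ\,\nn-\sigma$, so
\[
\Theta(S)=\typ\,\nn-\sigma(S)-\gnus(S).
\]
Substituting this into $\typ\le\ee-1+\Theta$ gives
\[
\typ\le\ee-1+\typ\,\nn-\sigma-\gnus .
\]
Rearranging, we obtain $\gnus+\sigma\le\ee-1+\typ\,\nn-\typ=\ee-1+\typ(\nn-1)$, which is \eqref{eq:master}. Since $\sigma(S)\ge0$ by \eqref{eq:sigmaLambda}, discarding $\sigma$ yields $\gnus\le\ee-1+\typ(\nn-1)$.

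For the comparison with the Fr\"oberg--Gottlieb--H\"aggkvist bound, we compare the two right-hand sides directly. We have
\[
\ee-1+\typ(\nn-1)<\typ\,\nn\iff\ee-1<\typ\iff\typ\ge\ee,
\]
because $\ee$ and $\typ$ are integers. So the new bound is strictly smaller exactly when $\typ\ge\ee$. When $\typ\le\ee-1$ it is weaker than or equal to $\typ\,\nn$. We read ``strictly stronger'' as this strict comparison of the right-hand sides, which is the reading the introduction uses.

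The argument is pure substitution, so there is no real obstacle. The only points that need care are the signs when substituting \eqref{eq:sandwich}, and reading ``strictly stronger'' correctly as a statement about the bounds rather than about the semigroups for which they are attained.
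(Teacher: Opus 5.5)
Your proposal is correct and follows the paper's proof: you start from $\typ\le\ee-1+\Theta(S)$ in Theorem~\ref{thm:type}, substitute $\Theta=\Lam-\gnus$ and $\Lam=\typ\,\nn-\sigma$ from \eqref{eq:sandwich}, and drop $\sigma\ge0$. You then compare right-hand sides via $\ee-1+\typ(\nn-1)<\typ\,\nn\iff\ee-1<\typ$, which is the paper's argument step for step.
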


\begin{proof}
By Theorem \ref{thm:type}, $\typ\le\ee-1+\Theta(S)$, and $\Theta(S)=\Lam(S)-\gnus$ by
\eqref{eq:sandwich}; hence $\gnus\le\Lam(S)+\ee-1-\typ$. Substituting
$\Lam(S)=\typ\,\nn-\sigma(S)$, again from \eqref{eq:sandwich}, gives
\eqref{eq:master}, and
the second inequality follows from $\sigma\ge0$. Finally
$\ee-1+\typ(\nn-1)<\typ\,\nn$ if and only if $\ee-1<\typ$.
\end{proof}

Every step of this derivation is reversible, the identities \eqref{eq:sandwich} being
substitutions, so Corollary \ref{cor:master} is an equivalent formulation of the bound
$\typ\le\ee-1+\Theta(S)$, expressed in the invariants that occur in Conjecture
\ref{conj:wilf}. Rewritten in terms of the conductor it reads
$\cc\le\ee+\nn-1+\typ(\nn-1)-\sigma(S)$, and it yields Wilf's inequality for $S$
precisely when $\sigma(S)\ge(\typ+1-\ee)(\nn-1)$; the weaker form
$\gnus\le\ee-1+\typ(\nn-1)$ yields it only when $\typ\le\ee-1$ or $\nn=1$, since
$\ee-1+\typ(\nn-1)\le(\ee-1)\nn$ is equivalent to $(\nn-1)(\typ+1-\ee)\le0$.

The passage from the middle term of \eqref{eq:typebound} to $\ee-1+\Theta$ is not
tight in general, since it replaces a sum over the $\ee-1$ gaps $p-\mult$ by a sum
over all
gaps, and the intermediate term $\ee-1+\Xii$ recovers most of that loss. The first
inequality is an equality when the redundancy is absent: if each of the $\ee-1$ gaps
$p-\mult$ is dominated by a unique pseudo-Frobenius number, then
$\typ(S)\le\ee(S)-1$.

\begin{conjecture}\label{conj:sigma}
For every numerical semigroup $S\ne\N$, $\typ(S)\le\ee(S)-1+\sigma(S)$.
\end{conjecture}

Conjecture \ref{conj:sigma} holds for all numerical semigroups of genus at most $35$.
It is sharp, as are \eqref{eq:typebound} and \eqref{eq:master}, at
$S=T_{\mult,1}=\{0\}\cup[\mult,\infty)$: there $\LL=\{0\}$, $\nn=1$,
$\gnus=\mult-1$, $\ee=\mult$ and $\PF(S)=\{1,\dots,\mult-1\}$ by Lemma
\ref{lem:family}, so $\lam_S(f)=1=\nn$ for every $f\in\PF(S)$; hence $\sigma(S)=0$,
$\Lam(S)=\mult-1=\gnus$, $\Theta(S)=0$ by \eqref{eq:gcount}, and
$\typ=\mult-1=\ee-1$, both sides of \eqref{eq:master} being equal to $\mult-1$.

\section{A reduction of Wilf's conjecture}\label{sec:reduction}

The argument of this section uses only the identity \eqref{eq:defect} and is
independent of Section \ref{sec:poset}. By \eqref{eq:defect}, Conjecture
\ref{conj:wilf} is immediate when
$\typ\le\ee-1$ and is otherwise equivalent to the lower bound
$\sigma(S)+\Theta(S)\ge\bigl(\typ+1-\ee\bigr)\nn$, whose right-hand side grows with
$\nn$. The dominance defect is a sum of $\typ$ nonnegative terms, and only
$\typ+1-\ee$ of them are needed to absorb that growth.

\begin{proposition}\label{prop:reduction}
Let $S\ne\N$ be a numerical semigroup with $\typ(S)\ge\ee(S)$, write
$\PF(S)=\{f_1<f_2<\dots<f_{\typ}\}$ and put $d=\typ(S)+1-\ee(S)\ge1$. Then
\begin{equation}\label{eq:reduction}
\W(S)\;\ge\;\Theta(S)\;-\;\sum_{i=1}^{d}\lam_S(f_i).
\end{equation}
\end{proposition}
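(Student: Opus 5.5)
The plan is to start from the exact identity \eqref{eq:defect} and use only the nonnegativity of most of the summands of $\sigma(S)$. By \eqref{eq:sigmaLambda}, $\sigma(S)=\sum_{i=1}^{\typ}\bigl(\nn-\lam_S(f_i)\bigr)$, and each summand $\nn-\lam_S(f_i)$ is nonnegative, because $S\cap[0,f_i]\subseteq\LL$ for $f_i\le\FF$. Substituting this into \eqref{eq:defect} gives
\[
\W(S)=-(\typ+1-\ee)\nn+\sum_{i=1}^{\typ}\bigl(\nn-\lam_S(f_i)\bigr)+\Theta(S)
=-d\,\nn+\sum_{i=1}^{\typ}\bigl(\nn-\lam_S(f_i)\bigr)+\Theta(S).
\]

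Next we split the sum over $i$ into the first $d$ indices and the remaining $\typ-d=\ee-1$ indices; this split makes sense because $1\le d\le\typ$, as $\ee\ge2$ by Lemma \ref{lem:standard}(vii). The first $d$ summands contribute $d\,\nn-\sum_{i=1}^{d}\lam_S(f_i)$, and the $d\,\nn$ cancels exactly against the term $-d\,\nn$. This yields
\[
\W(S)=\Theta(S)-\sum_{i=1}^{d}\lam_S(f_i)+\sum_{i=d+1}^{\typ}\bigl(\nn-\lam_S(f_i)\bigr).
\]
Dropping the last sum, which is nonnegative, gives \eqref{eq:reduction}.

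No step is a real obstacle; the only point needing care is confirming that $d\le\typ$, so that the split is legitimate. The choice of the smallest $f_i$ is not needed for validity, since any $d$ of the pseudo-Frobenius numbers would work. Choosing the smallest ones makes $\sum_{i=1}^{d}\lam_S(f_i)$ as small as possible, because $\lam_S$ is nondecreasing, and so gives the strongest form of the inequality.
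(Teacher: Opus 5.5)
Your proposal is correct and is essentially the paper's proof: you expand $\sigma(S)$ via \eqref{eq:sigmaLambda}, keep the first $d$ nonnegative summands so that they cancel the $-d\,\nn$ term of \eqref{eq:defect}, and discard the rest. The only cosmetic difference is that you get $d\le\typ$ from $\ee\ge2$ (Lemma \ref{lem:standard}(vii)), whereas the paper needs only $\ee\ge1$.
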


\begin{proof}
By \eqref{eq:sigmaLambda},
$\sigma(S)=\sum_{i=1}^{\typ}\bigl(\nn-\lam_S(f_i)\bigr)$, and every summand is
nonnegative because $\lam_S(f)\le\nn$ for $f\in\PF(S)$. As $\ee\ge1$ we have
$d\le\typ$, so discarding all but the first $d$ summands gives
\[
\sigma(S)\;\ge\;\sum_{i=1}^{d}\bigl(\nn-\lam_S(f_i)\bigr)
\;=\;d\,\nn-\sum_{i=1}^{d}\lam_S(f_i).
\]
The first term of \eqref{eq:defect} is $(\ee-\typ-1)\nn=-d\,\nn$, so
$\W(S)=-d\,\nn+\sigma(S)+\Theta(S)\ge\Theta(S)-\sum_{i=1}^{d}\lam_S(f_i)$.
\end{proof}

Since $\lam_S$ is nondecreasing, retaining the $d$ \emph{smallest} pseudo-Frobenius
numbers gives the strongest inequality of this shape. In \eqref{eq:reduction} the
conductor and $\nn$ have disappeared, and the number of terms subtracted is governed
by how far $S$ lies outside the FGH range. The following statement asserts that its
right-hand side is nonnegative.

\begin{conjecture}\label{conj:R}
Let $S\ne\N$ be a numerical semigroup with $\typ(S)\ge\ee(S)$, write
$\PF(S)=\{f_1<\dots<f_{\typ}\}$ and put $d=\typ(S)+1-\ee(S)$. Then
\[
\Theta(S)\;\ge\;\sum_{i=1}^{d}\lam_S(f_i).
\]
\end{conjecture}

\begin{proposition}\label{prop:Rwilf}
Conjecture \ref{conj:R} implies Conjecture \ref{conj:wilf}.
\end{proposition}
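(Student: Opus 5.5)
The plan is to split according to whether $S$ lies in the FGH range.

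\emph{Case $\typ(S)\le\ee(S)-1$.} Here Conjecture \ref{conj:R} has no hypothesis to use, but none is needed. The last clause of Corollary \ref{cor:FGH} gives $\cc\le\ee\,\nn$ directly. Equivalently, in \eqref{eq:defect} all three terms $(\ee-\typ-1)\nn$, $\sigma(S)$ and $\Theta(S)$ are nonnegative, so $\W(S)\ge0$.

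\emph{Case $\typ(S)\ge\ee(S)$.} This is exactly the hypothesis of both Proposition \ref{prop:reduction} and Conjecture \ref{conj:R}, with the same ordering $f_1<\dots<f_{\typ}$ and the same $d=\typ+1-\ee\ge1$. Proposition \ref{prop:reduction} gives
\[
\W(S)\ \ge\ \Theta(S)-\sum_{i=1}^{d}\lam_S(f_i),
\]
and Conjecture \ref{conj:R}, which we are assuming, says the right-hand side is nonnegative. Hence $\W(S)\ge0$, which is Conjecture \ref{conj:wilf} for $S$.

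The two cases exhaust all $S\neq\N$, so the implication follows. The only point to check is that the hypotheses and notation of Conjecture \ref{conj:R} and Proposition \ref{prop:reduction} coincide, and they do verbatim. No step is a genuine obstacle: the substantive work is already contained in the identity \eqref{eq:defect} and in Proposition \ref{prop:reduction}.
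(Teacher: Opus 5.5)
Your proof is correct and matches the paper's argument: the FGH range $\typ\le\ee-1$ is handled by the last clause of Corollary \ref{cor:FGH}, and the range $\typ\ge\ee$ by combining \eqref{eq:reduction} with Conjecture \ref{conj:R}. Your alternative justification of the first case, that all three terms of \eqref{eq:defect} are nonnegative there, is equally valid.
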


\begin{proof}
Let $S\ne\N$. If $\typ(S)\le\ee(S)-1$ then $\W(S)\ge0$ by Corollary \ref{cor:FGH}.
Otherwise Conjecture \ref{conj:R} applies and \eqref{eq:reduction} gives
$\W(S)\ge0$.
\end{proof}

The case $d=1$, that is $\typ=\ee$, is the first one the
Fr\"oberg--Gottlieb--H\"aggkvist inequality fails to cover; there
\eqref{eq:reduction} reads $\W(S)\ge\Theta(S)-\lam_S(f_1)$, so Wilf's conjecture holds
for every numerical semigroup with $\typ=\ee$ whose least pseudo-Frobenius number
satisfies $\lam_S(f_1)\le\Theta(S)$.

\begin{remark}\label{rmk:Rstrength}
Conjecture \ref{conj:R} is not a weakening of Conjecture \ref{conj:wilf}. On the range
$\typ\ge\ee$ the latter is equivalent, by \eqref{eq:defect} and
\eqref{eq:sigmaLambda}, to
\[
\Theta(S)\;\ge\;\sum_{i=1}^{\typ}\lam_S(f_i)\;-\;(\typ-d)\,\nn ,
\]
and the difference between this right-hand side and that of Conjecture \ref{conj:R} is
$\sum_{i=d+1}^{\typ}\bigl(\nn-\lam_S(f_i)\bigr)\ge0$. The inequality of Conjecture
\ref{conj:R} is therefore pointwise at least as strong, and strictly stronger whenever
$\lam_S(f_i)<\nn$ for some $i>d$; a proof of it would be a proof of Conjecture
\ref{conj:wilf}, not a step towards one. The reduction changes the shape of the problem, not its
difficulty:
Conjecture \ref{conj:R} relates the redundancy defect to the pseudo-Frobenius numbers
alone, with neither of the global invariants $\cc$ and $\nn$ appearing; and, unlike
Conjecture \ref{conj:wilf}, it is not tight in the range we have computed.
\end{remark}

\section{The equality case}\label{sec:equality}

The equality case of \eqref{eq:FGH} was determined by Singhal \cite[Theorem
1.6]{Singhal}, in the equivalent form $\typ=\gnus/(\FF+1-\gnus)$ and without reference
to Wilf's conjecture. We give an independent proof, in the formulation that the
equality case of Wilf's conjecture requires: the hypothesis is
$\sigma(S)=\Theta(S)=0$, which is the condition \eqref{eq:defect} produces. We then
settle that case throughout the FGH range. Both rest
on a description of the semigroups with $\sigma=\Theta=0$, which pins down $\PF(S)$
and the spacing of $\LL(S)$. The first half of that description needs only
$\sigma=0$: it shows that $\PF(S)$ always contains the final run of gaps
$[\ell+1,\FF]$, and that $\sigma(S)$ vanishes precisely when it contains nothing
else.

\begin{lemma}\label{lem:top}
Let $S\ne\N$ be a numerical semigroup, put $\ell=\max\LL(S)$ and $K=\FF-\ell$. Then:
\begin{enumerate}
\item[(i)] $\mult\ge K+1$;
\item[(ii)] $[\ell+1,\FF]\subseteq\PF(S)$; in particular $\typ(S)\ge K$;
\item[(iii)] $\sigma(S)=0$ if and only if $\PF(S)=[\ell+1,\FF]$, and in that
case $\typ=K$, $\FF=\ell+\typ$ and $\cc=\ell+\typ+1$.
\end{enumerate}
\end{lemma}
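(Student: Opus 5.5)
The three parts are proved in order. Throughout, $\ell=\max\LL(S)$ is the largest element of $S$ below $\cc$. Since $\ell<\cc=\FF+1$ and $\ell\ne\FF$, we have $\ell<\FF$. Hence $K\ge1$ and $[\ell+1,\FF]$ consists entirely of gaps, because no element of $S$ lies strictly between $\ell$ and $\cc$.

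For (i), I use that $\ell\in S$ and $\mult\in S$, so $\ell+\mult\in S$. As $\ell+\mult>\ell$ and $\ell+\mult\notin(\ell,\FF]$, we get $\ell+\mult\ge\cc=\FF+1$, that is, $\mult\ge\FF-\ell+1=K+1$.

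For (ii), take $x\in[\ell+1,\FF]$ and $s\in S^{*}$. Then $x+s\ge\ell+1+\mult\ge\ell+1+K+1=\FF+2>\FF$ by (i), so $x+s\in S$. Since $x$ is a gap, $x\in\PF(S)$. The interval has $K$ elements, so $\typ\ge K$.

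For (iii), I use the description $\sigma(S)=\sum_{f\in\PF(S)}|S\cap(f,\FF]|$.
\begin{itemize}
\item A summand vanishes exactly when no element of $S$ lies in $(f,\FF]$. Since $S\cap[0,\FF]=\LL$, whose largest element is $\ell$, this is equivalent to $f\ge\ell$, that is, $f\ge\ell+1$ because $f$ is a gap and $\ell\in S$.
\item Hence $\sigma(S)=0$ if and only if every $f\in\PF(S)$ lies in $[\ell+1,\FF]$. Combined with (ii), this is equivalent to $\PF(S)=[\ell+1,\FF]$.
\item In that case $\typ=K=\FF-\ell$, so $\FF=\ell+\typ$ and $\cc=\ell+\typ+1$.
\end{itemize}

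The only step needing care is the inequality $\mult\ge K+1$ in (i), which drives (ii). Everything else is unwinding definitions, together with the observation that $S\cap(f,\FF]=\emptyset$ precisely when $f\ge\ell$.
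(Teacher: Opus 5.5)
Your proof is correct and follows the paper's argument essentially step for step. Part (i) comes from $\ell+\mult\in S$ together with $S\cap(\ell,\cc)=\emptyset$, and part (ii) from $x+s\ge\ell+1+\mult\ge\FF+2$. Part (iii) comes from observing that the summand $|S\cap(f,\FF]|$ vanishes exactly when $f>\ell$, then invoking (ii) to upgrade the inclusion to equality. Your preliminary remark that $\ell<\FF$, so $K\ge1$, is a harmless addition that the paper leaves implicit.
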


\begin{proof}
(i) Both $\ell$ and $\mult$ lie in $S$, so $\ell+\mult\in S$; and $\ell+\mult>\ell$,
while $\ell=\max\LL$ means $S$ has no element in $(\ell,\cc)$. Hence
$\ell+\mult\ge\cc=\FF+1$, that is, $\mult\ge\FF+1-\ell=K+1$.

(ii) Let $x\in[\ell+1,\FF]$. Then $x\notin S$, again because $S\cap(\ell,\cc)=\emptyset$.
For $s\in S^{*}$ we have $s\ge\mult\ge K+1$ by (i), so $x+s\ge\ell+1+K+1=\FF+2>\FF$,
whence $x+s\ge\cc$ and $x+s\in S$. Thus $x\in\PF(S)$.

(iii) Let $f\in\PF(S)$. Then $f\ne\ell$, since $\ell\in S$. If $f<\ell$ then
$\ell\in S\cap(f,\FF]$, so that term contributes at least $1$ to $\sigma(S)$; if
$f>\ell$ then $S\cap(f,\FF]\subseteq S\cap(\ell,\cc)=\emptyset$ and the term vanishes.
Hence $\sigma(S)=0$ if and only if every $f\in\PF(S)$ exceeds $\ell$, that is, if and
only if $\PF(S)\subseteq[\ell+1,\FF]$; by (ii) this is equality, and then
$\typ=\FF-\ell=K$.
\end{proof}

The vanishing of both defects constrains the spacing of $\LL(S)$ directly. The next
lemma partitions the gap set and converts that partition into a count of the elements
of $\LL$ in each window of length $\typ$, which is the form the classification uses.

\begin{lemma}\label{lem:window}
Let $S\ne\N$ be a numerical semigroup with $\sigma(S)=\Theta(S)=0$, and set
$\ell=\max\LL(S)$. Then:
\begin{enumerate}
\item[(i)] $f>\ell$ and $S\cap[0,f]=\LL$ for every $f\in\PF(S)$, and the gap set is
partitioned as $G(S)=\bigsqcup_{f\in\PF(S)}(f-\LL)$;
\item[(ii)] for every $z\in[-\typ,\ell-1]$,
\begin{equation}\label{eq:window}
\bigl|\LL\cap(z,z+\typ]\bigr|=
\begin{cases}
0,&\text{if }\ell-z\in\LL,\\
1,&\text{otherwise.}
\end{cases}
\end{equation}
\end{enumerate}
\end{lemma}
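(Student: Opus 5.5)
Part (i) follows from the two defects separately, and part (ii) is a reindexing of the resulting partition. Since $\sigma(S)=0$, Lemma \ref{lem:top}(iii) gives $\PF(S)=[\ell+1,\FF]$ with $\typ=\FF-\ell$. In particular every $f\in\PF(S)$ satisfies $f>\ell$. Because $S\cap(\ell,\cc)=\emptyset$, we get $S\cap[0,f]=\LL\cap[0,\ell]=\LL$, which is the first half of (i).

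For the partition, I will use the map $\pi:\Pi\to G(S)$ from the proof of Proposition \ref{prop:defect}. Its domain is now $\Pi=\{(f,a):f\in\PF(S),\ a\in\LL\}$, by what was just shown. By Lemma \ref{lem:pairs}, $\pi$ maps into $G(S)$ and is surjective, so $G(S)=\bigcup_{f}(f-\LL)$. The fibre over a gap $x$ has $\nu_S(x)\ge1$ elements. Since $\Theta(S)=0$ is a sum of the nonnegative terms $\nu_S(x)-1$, every fibre is a singleton. Hence $\pi$ is a bijection, which says exactly that the sets $f-\LL$ are pairwise disjoint and each has $|\LL|$ elements. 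This gives (i).

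For (ii), fix $z\in[-\typ,\ell-1]$ and put $u=\ell-z$, so that $u\in[1,\ell+\typ]=[1,\FF]$. I will count the representations $u=f-a$ with $f\in\PF(S)=\{\ell+1,\dots,\ell+\typ\}$ and $a\in\LL$. Writing $f=\ell+j$ with $1\le j\le\typ$, the condition becomes $a=\ell+j-u=z+j$. So the number of such representations is $|\LL\cap(z,z+\typ]|$, the left side of \eqref{eq:window}.

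On the other hand, this number equals $\nu_S(u)$, because $f-u\in S$ with $f-u\le f$ means $f-u\in S\cap[0,f]=\LL$. By Definition \ref{def:defects} and Lemma \ref{lem:pairs}(i), $\nu_S(u)=0$ when $u\in S\cap[1,\FF]$, that is, when $\ell-z=u\in\LL$. By the bijectivity just established, $\nu_S(u)=1$ when $u$ is a gap. The remaining possibility $u\in\LL$ with $u=0$ cannot occur since $u\ge1$, so the two cases of \eqref{eq:window} are exhaustive. This completes (ii).

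I do not expect a genuine obstacle here. The only care needed is the index bookkeeping: the translation $u=\ell-z$ must map $[-\typ,\ell-1]$ exactly onto $[1,\FF]$, and the identity $S\cap[0,f]=\LL$ must be invoked, so that $\nu_S(u)$ counts elements of $\LL$ rather than of $S$.
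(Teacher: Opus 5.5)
Your proposal is correct and follows the paper's proof. Lemma \ref{lem:top}(iii) gives $\PF(S)=[\ell+1,\FF]$ and hence $S\cap[0,f]=\LL$; surjectivity of $\pi$ together with $\Theta(S)=0$ gives the partition; and your substitution $u=\ell-z$ (the paper's $z=\ell-x$) turns the exactly-once covering into the window count. Your observation that every $u\in[1,\FF]$ lies in $S$ if and only if it lies in $\LL$ replaces the paper's split into $z\ge0$ and $z<0$, which is a harmless simplification.
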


\begin{proof}
(i) By Lemma \ref{lem:top}(iii), $\sigma(S)=0$ gives $\PF(S)=[\ell+1,\FF]$, so
$f>\ell$ and hence $S\cap[0,f]=\LL$ for every $f\in\PF(S)$. By Lemma \ref{lem:pairs}
the sets $f-\LL$, $f\in\PF(S)$, cover
$G(S)$, and $\Theta(S)=0$ means that every gap is covered exactly once; the union is
therefore disjoint.

(ii) By Lemma \ref{lem:top}(iii) we have $\PF(S)=[\ell+1,\FF]$ and $\typ=\FF-\ell$, so
the partition in (i) says that for every gap $x$ there is exactly one
$j\in[1,\typ]$ with $\ell+j-x\in\LL$, while for $x\in S^{*}$ there is none, by Lemma
\ref{lem:pairs}(i). Substitute $z=\ell-x$: as $x$ runs over $[1,\FF]$, $z$ runs over
$[-\typ,\ell-1]$. For $z\ge0$ the membership $\ell-z\in S$ is equivalent to
$\ell-z\in\LL$, since $\ell-z\le\ell<\cc$; for $z<0$ one has $\ell-z\in(\ell,\FF]$,
which consists of gaps. This is \eqref{eq:window}.
\end{proof}

\begin{theorem}\label{thm:FGHequality}
Let $S\neq\N$ be a numerical semigroup. The following are equivalent:
\begin{enumerate}
\item[(i)] $\cc(S)=\bigl(\typ(S)+1\bigr)\nn(S)$;
\item[(ii)] $\sigma(S)=\Theta(S)=0$;
\item[(iii)] $S$ is symmetric, or $S=T_{\mult,k}$ for some $\mult\ge3$ and $k\ge1$.
\end{enumerate}
\end{theorem}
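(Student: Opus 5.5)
The equivalence of (i) and (ii) is immediate from Corollary \ref{cor:FGH}. There $\cc=(\typ+1)\nn-\sigma(S)-\Theta(S)$ with both defects nonnegative, so equality in \eqref{eq:FGH} holds if and only if $\sigma(S)=\Theta(S)=0$.

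For (iii)$\Rightarrow$(ii) I verify (i) on each family. If $S$ is symmetric, Lemma \ref{lem:standard}(iii) gives $\typ=1$ and $\gnus=\cc/2=\nn$, so $\cc=2\nn=(\typ+1)\nn$. For $T_{\mult,k}$, Lemma \ref{lem:family} gives $\cc=k\mult$, $\nn=k$ and $\typ=\mult-1$, so $(\typ+1)\nn=\mult k=\cc$.

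The substance is (ii)$\Rightarrow$(iii). Assume $\sigma=\Theta=0$ and put $\ell=\max\LL$. By Lemma \ref{lem:top}(iii), $\PF(S)=[\ell+1,\ell+\typ]$ and $\cc=\ell+\typ+1$. If $\typ=1$, then $S$ is symmetric by Lemma \ref{lem:standard}(iii), so assume $\typ\ge2$. The tool is the window identity \eqref{eq:window}: every half-open window $(z,z+\typ]$ with $z\in[-\typ,\ell-1]$ contains at most one element of $\LL$, and it contains exactly one precisely when $\ell-z\notin\LL$.

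I proceed in three steps.

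\begin{enumerate}
\item[(a)] Take $z=-\typ$. Then $\ell+\typ\notin\LL$ because it is the Frobenius number, so the window $(-\typ,0]$ contains exactly one element of $\LL$, namely $0$; this is consistent. Taking $z=0$, since $\ell\in\LL$ the window $(0,\typ]$ contains no element of $\LL$, so $\mult\ge\typ+1$.
\item[(b)] Consecutive elements of $\LL$ are at least $\typ+1$ apart, because a window of length $\typ$ contains at most one of them. For $z$ with $\ell-z$ a gap below $\ell$, the window must contain one element, so gaps between consecutive elements of $\LL$ cannot be too long. I expect to compare windows at $z$ and $z+1$ and show that the elements of $\LL$ are spaced with period $\typ+1$ relative to $0$ and to $\ell$. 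Concretely, I will try to prove by induction on $z$ that $\LL\cap[0,z]=\{0,\typ+1,2(\typ+1),\dots\}\cap[0,z]$, as long as the spacing is consistent with the reflected condition $\ell-z\in\LL$. The reflection $z\mapsto\ell-z$ couples the set $\LL$ to its own mirror image about $\ell/2$.
\item[(c)] With $\LL=\{0,\mult,2\mult,\dots,\ell\}$ and $\mult=\typ+1$, we get $\cc=\ell+\mult$. Since $\mult$ divides $\ell$, we have $\ell=(k-1)\mult$ and $\cc=k\mult$, so $S=T_{\mult,k}$ with $\mult=\typ+1\ge3$.
\end{enumerate}

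The main obstacle is the combinatorial step (b), deducing the arithmetic-progression structure from \eqref{eq:window}. A possible loophole is that the windows might force $\LL$ to be the self-mirror set $\{0,\ell\}$ with a large gap between them, or some other symmetric pattern. Such patterns must be excluded by counting. Summing \eqref{eq:window} over all $z\in[-\typ,\ell-1]$, the left side counts each element of $\LL$ exactly $\typ$ times, giving $\typ\,\nn$. The right side equals $\ell+\typ-(\nn-1)$, the number of $z$ with $\ell-z\notin\LL$. Hence $\ell=(\typ+1)(\nn-1)$. Together with the minimal spacing $\typ+1$ from (b), this forces the progression $\LL=\{0,\typ+1,\dots,(\nn-1)(\typ+1)\}$: $\nn$ elements, each consecutive pair at least $\typ+1$ apart, spanning exactly $[0,(\typ+1)(\nn-1)]$. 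This counting route is what I would try first, because it avoids the delicate induction altogether.
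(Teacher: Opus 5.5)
Your handling of (i)$\Leftrightarrow$(ii), of (iii)$\Rightarrow$(ii) and of the case $\typ=1$ is correct. Your counting identity is also correct: summing \eqref{eq:window} gives $\ell=(\typ+1)(\nn-1)$, which is just (i) rewritten via $\cc=\ell+\typ+1$.

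The gap is the claim in (b) that consecutive elements of $\LL$ are at least $\typ+1$ apart. A window $(z,z+\typ]$ contains only the $\typ$ integers $z+1,\dots,z+\typ$, so no window ever contains both $s$ and $s+\typ$. Knowing that each window meets $\LL$ at most once therefore gives only $s_{i+1}-s_i\ge\typ$; this is exactly what the paper extracts from it, with $z=s_i-1$. Spacing $\ge\typ$ together with the value of $\ell$ does not pin down $\LL$. For instance, with $\typ=2$ the set $\{0,2,6\}$ has spacing $\ge2$ and $\ell=6=(\typ+1)(\nn-1)$; it is excluded only by the finer information in \eqref{eq:window}. A telltale sign is that your argument for (b) never uses $\typ\ge2$. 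For $\typ=1$ it would show that every symmetric semigroup has $\LL$ an arithmetic progression of difference $2$. That fails for $\langle3,4\rangle$, where $\LL=\{0,3,4\}$, $\ell=4=2(\nn-1)$, and \eqref{eq:window} holds.

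The missing ingredient is the other half of \eqref{eq:window}: an empty window at $z$ forces $\ell-z\in\LL$. Suppose $s_{i+1}-s_i\ge\typ+2$. Then the windows at $z=s_i$ and $z=s_i+1$ both lie inside $(s_i,s_{i+1})$, so $\ell-s_i$ and $\ell-s_i-1$ are consecutive integers in $\LL$. This contradicts the spacing $\ge\typ\ge2$, and this is where $\typ\ge2$ enters. It gives $s_{i+1}-s_i\in\{\typ,\typ+1\}$, which is the paper's Step 1.

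Once you have Step 1, your count finishes the proof. The identity $\sum_i(s_{i+1}-s_i)=\ell=(\typ+1)(\nn-1)$, with every term at most $\typ+1$, forces every difference to equal $\typ+1$. This replaces the paper's Step 2, which proves the reflection identity $\ell-\LL=\LL$ and combines it with \eqref{eq:gapcrit}, by a one-line count, so the repaired argument is slightly shorter than the paper's. As written, though, step (b) is unproved, and the justification you give for it is false.
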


\begin{proof}
By \eqref{eq:gcount} and \eqref{eq:sigmaLambda},
\begin{equation}\label{eq:Ddefect}
(\typ+1)\nn-\cc=\typ\,\nn-\gnus=\sigma(S)+\Theta(S),
\end{equation}
and both summands are nonnegative; hence (i) $\Leftrightarrow$ (ii).

\smallskip\noindent
\emph{(iii) $\Rightarrow$ (i) and (ii).} If $S$ is symmetric then $\typ=1$ and
$\PF(S)=\{\FF\}$, so $\sigma(S)=|S\cap(\FF,\FF]|=0$; and for every gap $x$ we have
$\nu_S(x)\le|\PF(S)|=1$ and $\nu_S(x)\ge1$ by Lemma \ref{lem:standard}(ii), so
$\nu_S(x)=1$ and $\Theta(S)=0$. If $S=T_{\mult,k}$ then $\typ=\mult-1$ and
$\nn=k$ by Lemma \ref{lem:family}, so $(\typ+1)\nn=\mult k=\cc$, which is (i).

\smallskip\noindent
\emph{(ii) $\Rightarrow$ (iii).} Assume $\sigma(S)=\Theta(S)=0$ and set
$\ell=\max\LL(S)$. If $\typ=1$, the partition of Lemma \ref{lem:window}(i) reads
$G(S)=\FF-\LL$, that is, $x\in G(S)$ if and only if $\FF-x\in S$, which is precisely
the symmetry of $S$. Assume henceforth $\typ\ge2$. The proof that $S=T_{\mult,\nn}$
with $\mult=\typ+1\ge3$ proceeds in two steps.

\smallskip\noindent
\emph{Step 1: consecutive elements of $\LL$ differ by $\typ$ or $\typ+1$.}
By Lemma \ref{lem:window}(ii), each window $(z,z+\typ]$ meets $\LL$ in exactly one
point unless $\ell-z\in\LL$, in which case it misses $\LL$.
Write $\LL=\{s_0=0<s_1<\dots<s_{\nn-1}=\ell\}$, fix $i<\nn-1$, and put $s=s_i$,
$s'=s_{i+1}$. Taking $z=s-1\in[-\typ,\ell-1]$ in \eqref{eq:window}, the window
$(s-1,s-1+\typ]$
contains $s$, hence no other element of $\LL$; so $s'\ge s+\typ$.

Suppose $s'-s\ge\typ+2$. For $1\le k\le s'-s-\typ-1$ the window $(s+k,s+k+\typ]$ is
contained in $(s,s')$ and so misses $\LL$; by \eqref{eq:window} this forces
$\ell-s-k\in\LL$. If $s'-s\ge\typ+3$, then $k=1$ and $k=2$ are both admissible and
yield the consecutive integers $\ell-s-1,\ell-s-2$ in $\LL$, contradicting
$s'-s\ge\typ\ge2$ applied to that pair. If $s'-s=\typ+2$, then $k=1$ gives
$\ell-s-1\in\LL$, while $z=s$ in \eqref{eq:window} gives $\LL\cap(s,s+\typ]=\emptyset$
and hence $\ell-s\in\LL$; again $\LL$ contains two consecutive integers, a
contradiction. Therefore
\begin{equation}\label{eq:gapsize}
s_{i+1}-s_i\in\{\typ,\typ+1\}\qquad (0\le i\le \nn-2),
\end{equation}
and \eqref{eq:window} with $z=s_i$ gives
\begin{equation}\label{eq:gapcrit}
s_{i+1}-s_i=\typ+1\iff \ell-s_i\in\LL .
\end{equation}

\smallskip\noindent
\emph{Step 2: $\ell-\LL=\LL$, and conclusion.}
Let $z\in[0,\ell]\setminus\LL$, so $0<z<\ell$. Put $s=\max\{u\in\LL:u<z\}$ and
$s'=\min\{u\in\LL:u>z\}$. Then $z\ge s+1$ and, by \eqref{eq:gapsize},
$s'\le s+\typ+1\le z+\typ$, so $s'\in(z,z+\typ]$. The element of $\LL$ following
$s'$ is at least $s'+\typ>z+\typ$, so $|\LL\cap(z,z+\typ]|=1$, and
\eqref{eq:window} yields $\ell-z\notin\LL$. By contraposition, $\ell-z\in\LL$ implies
$z\in\LL$, so $\ell-\LL\subseteq\LL$; comparing cardinalities, $\ell-\LL=\LL$.

If $\nn=1$ then $\LL=\{0\}$, $\ell=0$ and $S=\{0\}\cup[\cc,\infty)$, so $\mult=\cc$;
by Lemma \ref{lem:top}(iii), $\PF(S)=[1,\FF]$ and $\typ=\FF=\cc-1=\mult-1$, whence
$\mult=\typ+1\ge3$ and $S=T_{\mult,1}$. Assume therefore $\nn\ge2$. Then
$\ell-s_i\in\LL$ for every $i$, so by \eqref{eq:gapcrit} every consecutive difference
equals $\typ+1$; since $s_1-s_0=\mult$, this gives $\mult=\typ+1\ge3$ and
\[
\LL=\{0,\mult,2\mult,\dots,(\nn-1)\mult\},\qquad \ell=(\nn-1)\mult .
\]
Finally $\FF=\ell+\typ=\nn\mult-1$, so $\cc=\nn\mult$ and
$S=\LL\cup[\cc,\infty)=T_{\mult,\nn}$.
\end{proof}

\begin{theorem}\label{thm:equality}
Let $S\neq\N$ be a numerical semigroup with $\ee(S)\ge\typ(S)+1$. Then the following
are equivalent:
\begin{enumerate}
\item[(i)] $\W(S)=0$;
\item[(ii)] $\ee(S)=\typ(S)+1$ and $\cc(S)=(\typ(S)+1)\nn(S)$;
\item[(iii)] $\ee(S)=2$, or $S=T_{\mult,k}$ for some $\mult\ge3$, $k\ge1$.
\end{enumerate}
\end{theorem}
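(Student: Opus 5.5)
The plan is to run everything through the defect decomposition \eqref{eq:defect} and then invoke Theorem \ref{thm:FGHequality}. Under the hypothesis $\ee\ge\typ+1$, all three summands of
\[
\W(S)=(\ee-\typ-1)\nn+\sigma(S)+\Theta(S)
\]
are nonnegative, since $\nn\ge1$ always ($0\in\LL$). So $\W(S)=0$ holds if and only if each summand vanishes, that is, if and only if $\ee=\typ+1$ and $\sigma(S)=\Theta(S)=0$. By the equivalence (i)$\Leftrightarrow$(ii) of Theorem \ref{thm:FGHequality} (or directly by \eqref{eq:Ddefect}), the condition $\sigma=\Theta=0$ is the same as $\cc=(\typ+1)\nn$. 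This gives (i)$\Leftrightarrow$(ii) at once; the converse direction is simply the computation $\W=(\typ+1)\nn-\cc=0$ once $\ee=\typ+1$.

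For (ii)$\Rightarrow$(iii), Theorem \ref{thm:FGHequality} tells us that $S$ is symmetric or $S=T_{\mult,k}$ with $\mult\ge3$.
\begin{itemize}
\item If $S$ is symmetric, then $\typ=1$ by Lemma \ref{lem:standard}(iii), so $\ee=\typ+1=2$.
\item Otherwise $S=T_{\mult,k}$ with $\mult\ge3$, and we are done.
\end{itemize}

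For (iii)$\Rightarrow$(ii), consider the two cases of (iii).
\begin{itemize}
\item If $\ee=2$, then $S$ is symmetric by Lemma \ref{lem:standard}(vii), hence $\typ=1=\ee-1$. Theorem \ref{thm:FGHequality} then gives $\cc=(\typ+1)\nn$.
\item If $S=T_{\mult,k}$, Lemma \ref{lem:family} gives $\ee=\mult$ and $\typ=\mult-1$, so $\ee=\typ+1$. It also gives $\cc=k\mult=(\typ+1)\nn$.
\end{itemize}
Note that this direction does not even need the standing hypothesis, since it is verified directly.

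There is no real obstacle here: all of the substantive work sits in Theorem \ref{thm:FGHequality}. The only point needing care is the nonnegativity of the first summand, which is exactly where the hypothesis $\ee\ge\typ+1$ is used, together with $\nn\ge1$, so that vanishing of the sum forces vanishing of each term.
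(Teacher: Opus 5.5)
Your proposal is correct and follows the paper's proof: nonnegativity of all three summands of \eqref{eq:defect} under $\ee\ge\typ+1$ for (i)$\Rightarrow$(ii), then Theorem \ref{thm:FGHequality} with Lemma \ref{lem:standard}(iii) for (ii)$\Rightarrow$(iii). The only difference is that you close the cycle through (iii)$\Rightarrow$(ii) and (ii)$\Rightarrow$(i), whereas the paper checks $\W(S)=0$ directly for $\ee=2$ and for $T_{\mult,k}$; this is immaterial.
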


\begin{proof}
(i) $\Rightarrow$ (ii). By \eqref{eq:defect},
$0=\W(S)=(\ee-\typ-1)\nn+\sigma(S)+\Theta(S)$. All three summands are nonnegative,
the first because $\ee\ge\typ+1$ by hypothesis and $\nn\ge1$. Hence
$\ee=\typ+1$ and $\sigma(S)=\Theta(S)=0$, and the latter gives
$\cc=(\typ+1)\nn$ by \eqref{eq:Ddefect}.

(ii) $\Rightarrow$ (iii). By Theorem \ref{thm:FGHequality}, $S$ is symmetric or
$S=T_{\mult,k}$ with $\mult\ge3$. If $S$ is symmetric then $\typ=1$ by Lemma
\ref{lem:standard}(iii), so $\ee=\typ+1=2$.

(iii) $\Rightarrow$ (i). If $\ee=2$ then $S$ is symmetric by Lemma
\ref{lem:standard}(vii), so $\gnus=\cc/2$ and $\nn=\cc-\gnus=\cc/2$, whence
$\W(S)=2\cdot(\cc/2)-\cc=0$. If $S=T_{\mult,k}$ then $\W(S)=0$ by Lemma
\ref{lem:family}.
\end{proof}

Only the implication (i) $\Rightarrow$ (ii) uses the hypothesis $\ee\ge\typ+1$; the
equivalence (ii) $\Leftrightarrow$ (iii) and the implication (iii) $\Rightarrow$ (i)
hold for every numerical semigroup $S\ne\N$. Indeed (iii) already forces
$\ee=\typ+1$: if $\ee=2$ then $S$ is symmetric and $\typ=1$ by Lemma
\ref{lem:standard}(iii) and (vii), while $S=T_{\mult,k}$ has $\ee=\mult=\typ+1$ by
Lemma \ref{lem:family}. What Theorem \ref{thm:equality} leaves
open is therefore precisely whether $\W(S)=0$ forces $\ee(S)=\typ(S)+1$.

By Lemma \ref{lem:family}, Theorem \ref{thm:equality} answers \cite[Question 8]{MS}
for every numerical semigroup with $\ee\ge\typ+1$. Since Corollary \ref{cor:theta0}
places the locus $\{\Theta=0\}$ inside the FGH range,
the theorem also settles the equality case for every $S$ with $\Theta(S)=0$.

\begin{remark}\label{rmk:limits}
The hypothesis $\ee\ge\typ+1$ in Theorem \ref{thm:equality} cannot be removed by the
present method. By \eqref{eq:defect}, a numerical semigroup with $\W(S)=0$ and
$\ee\le\typ$ would satisfy $\sigma(S)+\Theta(S)=(\typ+1-\ee)\nn\ge\nn$, and excluding
this would require control of $\gnus$ in the range $\ee\le\typ$, where Conjecture
\ref{conj:wilf} is itself open.
\end{remark}

\section{\texorpdfstring{Semigroups with $\cc\le2\mult$: a correction}
{Semigroups with c <= 2m: a correction}}\label{sec:kaplan}

This section rests only on Lemma \ref{lem:family} and is independent of Sections
\ref{sec:poset}--\ref{sec:equality}. Theorem \ref{thm:equality} settled the equality
case $\W(S)=0$ throughout the FGH range; a second unbounded range, $\cc\le2\mult$, is
not contained in the first, since Example \ref{ex:poset} has $\cc=2\mult$ and
$\ee<\typ$. The outcome corrects \cite[Proposition 26]{Kaplan}. The correction concerns
only the list of
equality
cases: \cite[Theorem 24]{Kaplan} and every other result of \cite{Kaplan} are
unaffected. The combinatorial input is the following notion.

\begin{definition}\label{def:perfect}
A finite set $B=\{b_1,\dots,b_a\}\subseteq\N$ with $0\in B$ is a \emph{perfect
additive basis} of order $2$ if the sums $b_i+b_j$ with $i\le j$ constitute the
interval $\bigl[0,\binom{a+1}{2}-1\bigr]\cap\Z$. Since that interval has
$\binom{a+1}{2}$ elements and there are exactly $\binom{a+1}{2}$ such sums, the sums
are then pairwise distinct.
\end{definition}

\begin{lemma}\label{lem:perfect}
The only perfect additive bases of order $2$ are $B=\{0\}$ and $B=\{0,1\}$; in
particular such a $B$ has at most two elements.
\end{lemma}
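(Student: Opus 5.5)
Order the elements as $0=b_1<b_2<\dots<b_a$ and write $N=\binom{a+1}{2}$. Since every sum $b_i+b_j$ lies in $[0,N-1]$ and the sums are pairwise distinct, we can read off the small elements by greedy forcing and look for a collision. First, $1$ must be a sum. The only sums below $b_2$ are $0=b_1+b_1$, so if $a\ge2$ we get $b_2=1$. The check that $\{0\}$ and $\{0,1\}$ are perfect is immediate: their sums are $\{0\}$ and $\{0,1,2\}$, with $N=1$ and $N=3$.

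Now suppose $a\ge3$ and derive a contradiction. The sums built from $\{0,1\}$ are $0,1,2$, so $3$ must come from an element $b_3$. The available sums are $b_3$, $b_3+1$ and $2b_3$, and those involving larger $b_j$ are even bigger, so $3=b_3$ or $3=b_3+1$. In the first case, if $b_3=2$, then $2=b_3+b_1=b_2+b_2$ is a repeated sum, which is impossible. Hence $b_3=3$, and the sums from $\{0,1,3\}$ are $0,1,2,3,4,6$. If $a=3$ these should be $[0,5]$, but $5$ is missing and $6>5$, a contradiction.

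So $a\ge4$, and $5$ must be a sum involving $b_4\ge4$. This forces $b_4\in\{4,5\}$, since $b_4+1=5$ or $b_4=5$. If $b_4=4$, then $4=b_4+0=3+1$ repeats. If $b_4=5$, then $6=5+1=3+3$ repeats. Either way we have a contradiction, so $a\le2$.

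The forcing stops cleanly at $b_4$, so no general inductive step is needed. The only thing to check at each stage is that the required next integer can only come from the new element, and this holds because all sums involving $b_j$ with $j>k$ are at least $b_j>b_k$. This monotonicity bookkeeping is the only mildly delicate point; the rest is the two collisions $3+1=4+0$ and $5+1=3+3$.
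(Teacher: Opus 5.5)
Your proof is correct and takes essentially the paper's route: both force $1\in B$, $2\notin B$, $3\in B$ and then $b_4\in\{4,5\}$ (the paper: $4\notin B$, $5\in B$), using the same collisions $1+1=0+2$, $1+3=0+4$ and $3+3=1+5$; the paper phrases this through the possible representations of $1,\dots,5$, while you phrase it through the ordered elements $b_2,b_3,b_4$. One small slip: the exclusion of $b_3=2$ belongs to your second case $3=b_3+1$, not the first; and $b_3\in\{2,3\}$ is best justified by noting that any sum outside $\{0,1,2\}$ involves some $b_j$ with $j\ge3$, so is at least $b_3$.
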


\begin{proof}
For $a=1$ the unique candidate is $B=\{0\}$, with sum set $\{0\}$; for $a=2$,
$B=\{0,b\}$ has sums $0,b,2b$, and these form $\{0,1,2\}$ exactly when $b=1$.

Suppose $a\ge3$ and put $N=\binom{a+1}{2}=a(a+1)/2\ge6$, so that the sum set is
$[0,N-1]$. The integer $1$ must be a sum $b_i+b_j$; as all elements of $B$ are
nonnegative and two positive elements sum to at least $2$, necessarily $1=0+1$ and
so $1\in B$. Then $2=1+1$ is a representation of $2$; if also $2\in B$ then
$2=0+2$ would be a second one, contradicting distinctness, so $2\notin B$. Since
$N-1\ge5$, the integer $3$ lies in the sum set; the only possible representations
are $0+3$ and $1+2$, and the latter is excluded, so $3\in B$. Then $4=1+3$; if
$4\in B$ then $4=0+4$ would be a second representation, so $4\notin B$. Since
$N-1\ge5$, the integer $5$ lies in the sum set, and its possible representations
$0+5$, $1+4$, $2+3$ leave only $5\in B$.

We have shown $\{0,1,3,5\}\subseteq B$. If $a=3$ this contradicts $|B|=3$.
If $a\ge4$ then $N-1\ge9\ge6$, so $6$ lies in the sum set; but $6=3+3=1+5$ are two
distinct representations, a contradiction. Hence $a\le2$.
\end{proof}

This yields the corrected classification.

\begin{theorem}\label{thm:kaplan}
Let $S\ne\N$ be a numerical semigroup with $\cc(S)\le2\mult(S)$. Then $\W(S)=0$ if
and only if
\[
S=T_{\mult,1}=\langle \mult,\mult+1,\dots,2\mult-1\rangle
\quad\text{or}\quad
S=T_{\mult,2}=\langle \mult,2\mult+1,\dots,3\mult-1\rangle
\qquad(\mult\ge2),
\]
or $S=\langle3,4\rangle$.
\end{theorem}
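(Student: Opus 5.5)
We argue directly from the structure of $\LL(S)$; this is independent of Sections \ref{sec:poset}--\ref{sec:equality}. The plan is to write $\ee$ and $\nn$ in terms of the set $A=S\cap[\mult,\cc)$, bound the Wilf number below by an expression that is visibly nonnegative, and read off the equality cases. Lemma \ref{lem:perfect} enters at exactly one point.

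\emph{Step 1: the case $\nn=1$.} If $\nn=1$ then $\LL=\{0\}$, so $S=\{0\}\cup[\cc,\infty)$ and $\mult=\cc$. Hence $S=T_{\mult,1}$, and $\W=0$ by Lemma \ref{lem:family}. From now on assume $\nn\ge2$. Then $\mult<\cc\le2\mult$, and $\LL=\{0\}\cup A$ with $a:=|A|=\nn-1\ge1$ and $\mult=\min A$.

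\emph{Step 2: counting the primitives.} This is the step needing care. Since $A\subseteq[\mult,2\mult)$, any sum of two elements of $S^{*}$ is at least $2\mult\ge\cc$. Therefore every element of $A$ is primitive. All primitives lie in $[\mult,\cc+\mult)$, so the remaining primitives lie in $I=[\cc,\cc+\mult)$. An element of $I$ is decomposable only as a sum of two elements of $A$: a summand $\ge\cc$ together with a summand $\ge\mult$ gives a total $\ge\cc+\mult$. Put $k=|(A+A)\cap I|$, where $A+A$ includes the doubles $2x$. Then
\[
\ee=a+\mult-k,\qquad k\le\min\Bigl(\mult,\tbinom{a+1}{2}\Bigr).
\]

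\emph{Step 3: the lower bound.} Substituting $\nn=a+1$ and $\cc\le2\mult$, and using $a(a+1)\ge2k$, we get
\[
\W=(a+1)(a+\mult-k)-\cc=(a+1)(\mult-k)+a(a+1)-\cc\ \ge\ (a+1)(\mult-k)+2k-2\mult=(a-1)(\mult-k)\ \ge\ 0 .
\]
So $\W=0$ forces three conditions: $\cc=2\mult$; $k=\binom{a+1}{2}$, meaning the sums $x+y$ ($x\le y$ in $A$) are pairwise distinct and all lie in $I$; and $a=1$ or $k=\mult$.

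\emph{Step 4: the equality cases.}

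If $a=1$, then $A=\{\mult\}$, since $\mult\in A$. Thus $\LL=\{0,\mult\}$ and $\cc=2\mult$, which gives $S=T_{\mult,2}$.

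If $a\ge2$, then $k=\mult=\binom{a+1}{2}$, so the $\binom{a+1}{2}$ sums fill $I=[2\mult,3\mult)$ exactly. Hence $B=A-\mult$ contains $0$, has $a$ elements, and its pairwise sums are exactly the interval $[0,\binom{a+1}{2}-1]$. That is, $B$ is a perfect additive basis of order $2$. By Lemma \ref{lem:perfect}, $B=\{0,1\}$, so $a=2$ and $\mult=3$. This gives $A=\{3,4\}$ and $\cc=6$, i.e. $S=\{0,3,4\}\cup[6,\infty)=\langle3,4\rangle$.

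\emph{Converse.} $T_{\mult,1}$ and $T_{\mult,2}$ have $\W=0$ by Lemma \ref{lem:family}. For $\langle3,4\rangle$ one checks directly that $\ee\nn=2\cdot3=6=\cc$, or notes that it is $2$-generated and hence symmetric.

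The main obstacle is Step 2: one must justify that decompositions of elements of $I$ use only pairs from $A$, since this is what makes $\ee$ computable and the bound $k\le\binom{a+1}{2}$ available. Step 4 also needs the observation that $\min A=\mult$, which rules out other singletons $A=\{x\}$.
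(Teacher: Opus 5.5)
Your proof is correct and follows the paper's route: the same count $\ee=a+\mult-k$ (your $k$ is the paper's $\delta$), the same split into $a=1$ versus $a\ge2$, and the same appeal to Lemma~\ref{lem:perfect}. The only difference is in the bookkeeping. You write $\W=(a-1)(\mult-k)+\bigl(a(a+1)-2k\bigr)+(2\mult-\cc)$ as a sum of three nonnegative terms, whereas the paper rewrites $\W=0$ as $a\mult=u+(a+1)(\delta-a)$ and squeezes it between two inequality chains; your version is slightly more direct and also recovers Kaplan's inequality $\W\ge0$ on this range as a by-product.
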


\begin{proof}
Sufficiency is clear: $T_{\mult,1}$ and $T_{\mult,2}$ satisfy $\W=0$ by Lemma
\ref{lem:family} and have $\cc=\mult\le2\mult$ and $\cc=2\mult$ respectively, while
$\langle3,4\rangle$ has $\mult=3$, $\cc=6=2\mult$, $\nn=3$, $\ee=2$ and
$\W=2\cdot3-6=0$.

For the converse, write $u=\cc-\mult$, so $0\le u\le\mult$ by hypothesis, and set
\[
A=S\cap[\mult,\cc),\qquad a=|A|,\qquad \LL=\{0\}\cup A,\qquad \nn=a+1 .
\]
If $u=0$ then $\cc=\mult$, $A=\emptyset$, $\LL=\{0\}$ and
$S=\{0\}\cup[\mult,\infty)=T_{\mult,1}$; so assume $u\ge1$, whence $\mult\in A$ and
$a\ge1$. Put $B=A-\mult\subseteq[0,u)$, so $0\in B$ and $|B|=a$.

We first compute $\ee$. Every element of $A$ is primitive: it lies in
$[\mult,2\mult)$ and a sum of two elements of $S^{*}$ is at least $2\mult$. Every
element of $[\cc+\mult,\infty)$ is decomposable. Finally, a decomposable element of
$[\cc,\cc+\mult)$ is a sum $y+z$ with $y,z\in S^{*}$ and $y+z<\cc+\mult\le3\mult$;
since $y,z\ge\mult$, both satisfy $y,z<\cc$, and any element of $S^{*}$ below $\cc$
lies in $A$; hence $y,z\in A$. Thus the decomposable
elements of $[\cc,\cc+\mult)$ are exactly those of $(A+A)\cap[\cc,\cc+\mult)$, and
since $A+A=2\mult+(B+B)$ and
$2\mult+\beta\in[\cc,\cc+\mult)=[\mult+u,2\mult+u)$ holds precisely when
$0\le\beta<u$, their number is
\[
\delta:=\bigl|(B+B)\cap[0,u)\bigr| .
\]
Consequently
\begin{equation}\label{eq:e-kaplan}
\ee=a+\mult-\delta .
\end{equation}
Note $B\subseteq B+B$ because $0\in B$, so $\delta\ge a$; also $\delta\le u$ and
$\delta\le|B+B|\le\binom{a+1}{2}=a(a+1)/2$.

Now suppose $\W(S)=0$, that is, $\ee\,\nn=\cc$. By \eqref{eq:e-kaplan} this reads
$(a+\mult-\delta)(a+1)=\mult+u$, which after expanding becomes
\begin{equation}\label{eq:kaplan-key}
a\,\mult\;=\;u+(a+1)(\delta-a).
\end{equation}

Suppose first $a=1$. Then $B=\{0\}$, $B+B=\{0\}$ and $\delta=1$, so
\eqref{eq:kaplan-key} yields $\mult=u$. Hence $\cc=2\mult$ and $A=\{\mult\}$, so
\[
S=\{0,\mult\}\cup[2\mult,\infty)=\langle \mult,2\mult+1,\dots,3\mult-1\rangle=T_{\mult,2}.
\]

Assume $a\ge2$. Using $\delta\le a(a+1)/2$ and $u\le\mult$ in
\eqref{eq:kaplan-key},
\[
a\,\mult\;\le\;\mult+(a+1)\Bigl(\frac{a(a+1)}{2}-a\Bigr)
=\mult+\frac{a(a-1)(a+1)}{2},
\]
so $\mult(a-1)\le a(a-1)(a+1)/2$ and therefore
\begin{equation}\label{eq:kaplan-b1}
\mult\;\le\;\frac{a(a+1)}{2}.
\end{equation}
Using instead $\delta\le u\le\mult$ in \eqref{eq:kaplan-key},
\[
a\,\mult\;\le\;u+(a+1)(u-a)=u(a+2)-a(a+1)\;\le\;\mult(a+2)-a(a+1),
\]
so $a(a+1)\le2\mult$, which combined with \eqref{eq:kaplan-b1} gives
\begin{equation*}
a(a+1)=2\mult
\end{equation*}
and forces equality throughout both chains. Equality in the second chain requires
$u=\mult$ and $\delta=u$; equality in the first requires
$\delta=a(a+1)/2$. Hence
\[
u=\mult,\qquad \cc=2\mult,\qquad \delta=\mult=\frac{a(a+1)}{2}.
\]
Now $\delta=\bigl|(B+B)\cap[0,\mult)\bigr|=\mult$ means $B+B\supseteq[0,\mult)$,
while $|B+B|\le a(a+1)/2=\mult$; therefore $B+B=[0,\mult)$ and all the $a(a+1)/2$
sums $b_i+b_j$ $(i\le j)$ are pairwise distinct. Thus $B$ is a perfect additive
basis of order $2$, and Lemma \ref{lem:perfect} gives $a\le2$, hence $a=2$,
$\mult=3$ and $B=\{0,1\}$. Then $A=\{3,4\}$, $\cc=6$ and
$S=\{0,3,4\}\cup[6,\infty)=\langle3,4\rangle$.
\end{proof}

\begin{remark}\label{rmk:kaplan}
The family $T_{\mult,2}$ is absent from \cite[Proposition 26]{Kaplan} for the
following reason. In the notation of that proof, $S$ is written as
$\langle \mult,k_1\mult+1,\dots,k_{\mult-2}\mult+\mult-2,2\mult+\mult-1\rangle$ and
$R$ denotes the number of indices $i$ with $k_i=1$; the argument concludes by
observing that a certain expression equals $R\bigl[(R+1)(R+2)/2-\mult\bigr]$ and is
nonnegative ``if and only if $(R+1)(R+2)\ge2\mult$''. That equivalence fails at
$R=0$, where the expression vanishes identically for every $\mult$. The case $R=0$
is exactly $\Ap(S,\mult)=\{0,2\mult+1,2\mult+2,\dots,3\mult-1\}$, that is,
$S=T_{\mult,2}$. The family occurs explicitly in \cite{Kaplan} as an example showing that
$\FF<2\mult$ does not imply $2\gnus<3\mult$.
\end{remark}

\begin{remark}\label{rmk:independent}
Every semigroup listed in Theorem \ref{thm:kaplan} satisfies $\ee=\typ+1$, by Lemma
\ref{lem:family} for $T_{\mult,1}$ and $T_{\mult,2}$ and directly for
$\langle3,4\rangle$. Hence $\W(S)=0$ implies $\ee(S)=\typ(S)+1$ on the whole range
$\cc\le2\mult$.
\end{remark}

Theorems \ref{thm:equality} and \ref{thm:kaplan} settle the equality case on two
unbounded ranges, neither containing the other, and on each of them $\W(S)=0$ forces
$\ee=\typ+1$. We conjecture that no hypothesis is needed.

\begin{conjecture}\label{conj:final}
Let $S\ne\N$ be a numerical semigroup with $\W(S)=0$. Then $\ee(S)=\typ(S)+1$;
consequently $\ee(S)=2$, or $S=T_{\mult,k}$ for some $\mult\ge3$ and $k\ge1$.
\end{conjecture}

The second assertion is \cite[Question 8]{MS}; the first is stronger, and by Remark
\ref{rmk:limits} any proof of it outside the FGH range must control $\gnus$ in the
regime $\ee\le\typ$.

\section{Computational verification}\label{sec:computation}

No proof in this paper depends on a computation: every theorem, proposition and
corollary above is established by an argument that is self-contained modulo the
standard facts collected in Lemma \ref{lem:standard}, and the examples are verified by
hand in the text. The computations reported here serve as an independent check of
those results and as the evidence for Conjectures \ref{conj:sigma}, \ref{conj:R} and
\ref{conj:final}.

We enumerated numerical semigroups by genus using the standard tree in which the
children of $S$ are the semigroups $S\setminus\{x\}$ with $x$ a primitive element of
$S$ satisfying $x>\FF(S)$; the root $\N$ is handled separately, its unique child being
$\N\setminus\{1\}$. The enumeration was validated against the counting sequence
$N_\gnus$ of numerical semigroups of genus $\gnus$,
\[
1,\,1,\,2,\,4,\,7,\,12,\,23,\,39,\,67,\,118,\,204,\,343,\,592,\,1001,\,1693,\,2857,\dots,
\]
which our program reproduces exactly up to $\gnus=31$, where $N_{31}=9{,}266{,}788$
(see \cite{BrasAmoros08,FH}).

For each semigroup we computed $\mult,\FF,\cc,\gnus,\nn,\ee,\typ$, the set $\PF(S)$,
the Ap\'ery sets $\Ap(S,s)$ and the order relations of $Q_s(S)$, the functions
$\lam_S$ and $\nu_S$, the invariants $\Lam,\sigma,\Theta$ of Definition
\ref{def:defects}, the quantity $\Xii$ of \eqref{eq:xi}, and the Wilf number.
The following assertions, listed in the order in which the paper states them, were
tested and held without exception over all $23{,}663{,}125$ numerical semigroups
$S\ne\N$ of genus at most $31$:

\begin{enumerate}
\item[(V1)] $\W(S)\ge0$ (Conjecture \ref{conj:wilf}, reconfirming \cite{DEF} in this
range);
\item[(V2)] the identity $\W(S)=(\ee-\typ-1)\nn+\sigma(S)+\Theta(S)$ of Proposition
\ref{prop:defect};
\item[(V3)] the bound \eqref{eq:general} holds for every $s\in\LL(S)\setminus\{0\}$;
and the identifications $\Min Q_s(S)$ and $\Max Q_s(S)$ of Lemma \ref{lem:poset} hold,
again for every such $s$, as equalities of sets and not merely of cardinalities. The
latter check is cubic in $\FF$ and was run over the $258{,}581$ numerical semigroups of
genus at most $22$;
\item[(V4)] $\typ\le\sum_{p\in P\setminus\{\mult\}}\nu_S(p-\mult)\le\ee-1+\Xii(S)\le
\ee-1+\Theta(S)$ (Theorem \ref{thm:type});
\item[(V5)] $\gnus+\sigma(S)\le\ee-1+\typ(\nn-1)$ (Corollary \ref{cor:master});
\item[(V6)] $\typ\le\ee-1+\sigma(S)$ (Conjecture \ref{conj:sigma});
\item[(V7)] $\Theta(S)\ge\sum_{i=1}^{d}\lam_S(f_i)$ whenever $\typ\ge\ee$
(Conjecture \ref{conj:R});
\item[(V8)] $\mult\ge K+1$, $[\ell+1,\FF]\subseteq\PF(S)$, and $\sigma(S)=0$ if and
only if $\PF(S)=[\ell+1,\FF]$, where $\ell=\max\LL$ and $K=\FF-\ell$ (Lemma
\ref{lem:top});
\item[(V9)] $\cc=(\typ+1)\nn$ if and only if $S$ is symmetric or $S=T_{\mult,k}$ for
some $\mult\ge3$ and $k\ge1$ (Theorem \ref{thm:FGHequality});
\item[(V10)] if $\ee\ge\typ+1$, then $\W(S)=0$ if and only if $\ee=2$ or
$S=T_{\mult,k}$ for some $\mult\ge3$ and $k\ge1$ (Theorem \ref{thm:equality}); and if
$\cc\le2\mult$, then $\W(S)=0$ if and only if $S=T_{\mult,1}$,
$S=T_{\mult,2}$ or $S=\langle3,4\rangle$ (Theorem \ref{thm:kaplan});
\item[(V11)] $\W(S)=0$ if and only if $\ee=2$ or $S=T_{\mult,k}$ for some $\mult\ge3$
and $k\ge1$, with no hypothesis relating $\ee$ and $\typ$, and in that case
$\ee=\typ+1$ (Conjecture \ref{conj:final}).
\end{enumerate}

Item (V11) goes beyond what is proved here: Theorem \ref{thm:equality} establishes
Conjecture \ref{conj:final} under the hypothesis $\ee\ge\typ+1$ and Theorem
\ref{thm:kaplan} for $\cc\le2\mult$, and since the enumeration is exhaustive, the
computation shows that no counterexample with $\ee\le\typ$ occurs in the range
$\gnus\le31$. Every numerical semigroup of genus at most $31$ with $\ee\le\typ$
satisfies $\W(S)\ge2$; the value $2$ is attained at genus $10$, for instance by
$\langle7,8,10,19\rangle$ and $\langle7,9,10,15\rangle$, both of which have
$\ee=\typ=4$, $\nn=4$ and $\cc=14$.

Items (V6) and (V7) were tested, without exception, over all $171{,}202{,}689$
numerical semigroups $S\ne\N$ of genus at most $35$, among which $9{,}559{,}659$
satisfy $\typ\ge\ee$. Conjecture \ref{conj:sigma} is sharp in that range, its slack
being $0$; Conjecture \ref{conj:R} is not, its slack being at least $2$ throughout.
The minimal slack $2$ is attained, for instance, by the semigroup
$S=\langle9,10,12,13\rangle$ of Example \ref{ex:poset}, where $d=2$,
$\PF(S)=\{11,14,15,16,17\}$, $\Theta(S)=10$ and $\lam_S(11)+\lam_S(14)=3+5=8$.

The enumeration also shows that, among the $23{,}663{,}125$ semigroups of genus at most
$31$, exactly $23{,}612$ satisfy $\Theta(S)=0$, and $\Xii(S)<\Theta(S)$ holds for all
but $24{,}094$. The locus $\Theta=0$ is thus sparse, in accordance with Theorem
\ref{thm:type}: vanishing $\Theta$ forces the Ap\'ery poset to have at least as many
minimal as maximal elements.

\section{Concluding remarks}\label{sec:conclusion}

Outside the FGH range the identity \eqref{eq:defect} makes Conjecture
\ref{conj:wilf} equivalent to
\[
\sigma(S)+\Theta(S)\;\ge\;\bigl(\typ(S)+1-\ee(S)\bigr)\,\nn(S),
\]
and Theorem \ref{thm:type} supplies $\Theta(S)\ge\typ+1-\ee$. Conjecture
\ref{conj:sigma} would supply the same for $\sigma(S)$, and the two together give
$\sigma(S)+\Theta(S)\ge2(\typ+1-\ee)$: the displayed inequality with $\nn$ replaced by
$2$. The remaining discrepancy is a factor of $\nn/2$, and it is what the bounds
established here leave open.

The three conjectures that remain open concern the regime $\ee\le\typ$, in which the
type has so far contributed nothing. Conjecture \ref{conj:final} asserts that the equality
case is confined to the FGH range. Conjecture \ref{conj:R} is of a different order: by
Proposition \ref{prop:Rwilf} it implies Conjecture \ref{conj:wilf} outright, and by
Remark \ref{rmk:Rstrength} it is no easier.

\section*{Code availability}
The verification reported in Section \ref{sec:computation} was carried out with the C
programs archived at \url{https://doi.org/10.5281/zenodo.21908585}.

\end{document}